\numberwithin{equation}{section} 
 \newtheorem{Theorem}{Theorem}[section] \newtheorem{Corollary}[Theorem]{Corollary} \newtheorem{Lemma}[Theorem]{Lemma} \newtheorem{Proposition}[Theorem]{Proposition} 
 \theoremstyle{definition}
  \newtheorem{Example}[Theorem]{Example} 
 \newtheorem{Remark}[Theorem]{Remark}
 \numberwithin{equation}{section}
 \DeclareMathOperator{\Spec}{Spec}  
\DeclareMathOperator{\codim}{codim}   \DeclareMathOperator{\Deg}{deg}
   \DeclareMathOperator{\Ht}{ht}
\DeclareMathOperator{\Reg}{reg}  \DeclareMathOperator{\grade}{grade}   \DeclareMathOperator{\A}{\alpha}
\def\xx{{\bf x}}
\def\yy{{\bf y}}
\def\TT{{\bf T}}
\def\ZZ{{\bf Z}}
\newcommand{\fm}{\mathfrak{m}}
\newcommand{\fp}{\mathfrak{p}}
\newcommand{\fq}{\mathfrak{q}}
\newcommand{\fa}{\mathfrak{a}}
\def\pp{{\mathbb P}}
\newcommand{\rar}{\rightarrow}
\newcommand{\lar}{\longrightarrow}
\newcommand{\llar}{-\kern-5pt-\kern-5pt\longrightarrow}
\newcommand{\surjects}{\twoheadrightarrow}
\def\restr{{\kern-1pt\restriction\kern-1pt}}
\def\geq{\geqslant}
\def\leq{\leqslant}
\def\codim{\hbox{\rm codim}}
\def\proj{\hbox{\rm Proj}}
\def\spec{\hbox{\rm Spec}}
\def\sym{\hbox{\rm Sym}}
\begin{document}

\title{Degree of Rational Maps versus Syzygies}
\author{M. Chardin\quad S. H. Hassanzadeh\quad A. Simis\footnote{Partially
		supported by a CNPq grant (302298/2014-2).}}

\date{}

\maketitle
\footnotetext{Mathematics Subject Classification 2020
 (MSC2020). Primary   13A30, 13D02, 14E05
Secondary 13A02, 13H10, 13H15, 14A10.}

\begin{abstract}
One proves a far-reaching upper bound for the degree of a generically finite rational map between projective varieties over a base field of arbitrary characteristic. The bound is expressed as a product of certain degrees that appear naturally by considering the Rees algebra (blowup) of the base ideal defining the map.
Several special cases are obtained as consequences, some of which cover and extend previous results in the literature.

\end{abstract}

\section*{Introduction}

Let  $X\subseteq \mathbb{P}^n_k={\rm Proj}(k[x_0,\ldots,x_n])$ be a reduced and irreducible closed projective variety, where $k$ is a field of arbitrary characteristic. 
Let 
$$\Phi:X\dasharrow \mathbb{P}^m_k={\rm Proj}(k[y_0,\ldots,y_m])$$
be a generically finite rational map. 
The purpose of this work  is to give bounds for the degree $\deg(\Phi)$,
a numerical invariant that can be identified as the field degree of the extension of the corresponding function fields of $X$ and the (closure of the) image of $\Phi$.
It is common in algebraic geometry to deal with this invariant in terms of Chern classes.
In commutative algebra, it can be addressed in several other ways, including reformulations in terms of certain multiplicities.
Along this line, pretty much of the theory of rational/birational maps meets some important algebraic constructions and invariants.
 In this work the main discussion will be carried within this perspective; in particular, the obtained bounds are expressed in terms of these algebraic invariants.

The most basic fact at the outset is that $\Phi$ is represented by a set of forms $\{f_0,\ldots,f_m\}\subset R$ of a certain degree $d$ -- the so to say  {\em coordinates} of $\Phi$ -- where $R$ stands for the homogeneous coordinate ring of the given embedding $X\subseteq \mathbb{P}^n_k$.
Such a representation is unique up to proportionality of the coordinates with multipliers in the field of fractions of $R$.
The ideal $I:=(f_0,\ldots,f_m)\subset R$ is often called a {\em base ideal} of $\Phi$. It is this ideal, along with the various algebras associated to it, that will encompass most of the work.
This approach has been shown quite useful in the previous literature (see, e.g., \cite{BJ}, \cite{Si}, \cite{SiVi}, \cite{DHS}, \cite{KPU}).
 
 The main result of this paper is Theorem~\ref{TDegree}, along with its various consequences and special cases, some of which have been proved before. It gives upper bounds for $\deg(\Phi)$ and
 rests on a close study of the Rees algebra (blowup) $\mathcal{R}_R(I)$ of the base ideal $I$ and its main features. In particular, the essential statement draws upon the structure of the defining degrees of certain subideals of the defining ideal of $\mathcal{R}_R(I)$.
 
 Since the bound valid is for arbitrary maps, one cannot expect that it is attained without further assumptions on the base ideal. At the other end one obtains a lower bound in certain cases (Theorem~\ref{Tinverse}), in terms of other invariants such as the Castelnuovo--Mumford regularity and the so-called Jacobian dual rank of \cite{DHS}.
 
 The overall strategy of the main result is to reduce it to finding an upper bound for the multiplicity of a one-dimensional standard domain over the field of fractions of the homogeneous coordinate ring of the closed image of $\Phi$. Since the latter ring is identified with the special fiber (or fiber cone) of the base ideal, there ensues a subtle algebraic path to be followed in the argument.
 
 An important case of the theorem relates to the knowledge of the generating degrees of the syzygies of $I$. Upper bounds in terms of these degrees have been established before in particular cases of $I$, such as when it has dimension one or when it is Cohen--Macaulay of codimension two (see, e.g., \cite{SUV}, \cite{BCD}, \cite{CidSi}) -- exact formulas  of varying complexity are also known in terms of certain local multiplicities.
 A built-in reason that the upper bounds in Theorem~\ref{TDegree} typically fail to be attained is that they are a product structure, while exact formulas usually include differences of products.

The contents of the sections are as follows.

The first section brings in the required algebraic preliminaries and their mutual relationship. The basic ingredients are a standard graded algebra $R$ over an arbitrary field and an ideal $I\subset R$ generated by forms of a fixed degree. 

The second section is a collection of preliminaries regarding the degree of structures in certain situations and some prime avoidance like results regarding generators of homogeneous ideals of $R$.
The contents are pretty technical to be explained here, so the reader is referred to the details of the section.

The third section contains the main bulk of the paper.
The main result is Theorem~\ref{TDegree}, giving an upper bound for the degree of a generically finite rational map $\Phi:X\dasharrow Y$ as a product of the degree of $X$ and certain standard degrees coming from the Rees algebra $\mathcal{R}_R(I)$ of the base ideal $I\subset R$ of $\Phi$, where $R$ denotes the homogeneous coordinate ring of $X$.
Getting such degrees is pretty effective in the sense that they depend on the generating structure of the defining ideal $J$ of $\mathcal{R}_R(I)$ or of some subideal of $J$ over which $J$ is a minimal prime.
One of the consequences of the main theorem is an upper bound for $\deg(\Phi)$ in terms of the syzygy degrees of the base ideal $I$ or of certain graded submodules of maximal rank mapping to the syzygy module of $I$.
The section ends with some comparison with the details of the Jacobian dual matrix method as developed in \cite{DHS}. The ideas are then illustrated through a couple of examples.

The fourth section deals with lower bounds. These are expressed as sums involving the so-called Jacobian dual rank of $\Phi$ and the regularity of the one-dimensional  graded domain mentioned above.
It is shown, in particular, that the defect of the Jacobian dual rank of $\Phi$ relative to its maximal possible value is a lower bound for $\deg(\Phi)$. 

The last section recovers some of the consequences of Theorem~\ref{TDegree} in terms of the so-called {\em row ideals} introduced in \cite{EU}.

\section{Preliminaries}\label{prelims}

Let $R$ denote a Noetherian ring and $I\subset R$ an ideal.

The {\em Rees algebra}  of $I$ is defined as the $R$-subalgebra 
$$
\mathcal{R}_R(I):=R[Iu] = \bigoplus_{n\ge 0} I^nu^n  \subset R[u].
$$

Assume that $(R,\fm)$ is a standard graded algebra over a field $k$ with maximal irrelevant ideal $\fm=([R]_1)$. Then, just as for a local ring, one defines the {\em fiber cone} (or {\em special fiber}) of $I$ to be 
$$
\mathfrak{F}_R(I): = \mathcal{R}_R(I)/\fm\mathcal{R}_R(I),
$$
and the {\em analytic spread} of $I$, denoted by $\ell(I)$, to be the (Krull) dimension of $\mathfrak{F}_R(I)$.

Let $I \subset R$ be a homogeneous ideal generated by  forms $\{f_0,\ldots,f_m\} \subset R$ of the same degree $d>0$ -- in particular, $I=\left([I]_d\right)$.
Consider the bigraded $k $-algebra 
$$
S:=R \otimes_k  k[y_0,\ldots,y_m]=R[y_0,\ldots,y_m],
$$ 
where $k[y_0,\ldots,y_m]$ is a polynomial ring over $k$ and the bigrading is given by  ${\rm bideg}([R]_1)=(1,0)$ and ${\rm bideg}(y_i)=(0,1)$.
Setting ${\rm bideg}(u)=(-d, 1)$, then $\mathcal{R}_R(I)=R[Iu]$ inherits a bigraded structure over $k $.
One has a bihomogeneous $R$-homomorphism
$S \longrightarrow  \mathcal{R}_R(I) \subset R[u] ,
\quad y_i  \mapsto  f_iu.$
Thus, the bigraded structure of $\mathcal{R}_R(I)$ is given by 
$$
\mathcal{R}_R(I) = \bigoplus_{c, n \in \ZZ} {\left[\mathcal{R}_R(I)\right]}_{c, n} \quad \text{ and } \quad {\left[\mathcal{R}_R(I)\right]}_{c, n} = {\left[I^n\right]}_{c + nd}u^n.
$$
One is often interested in the $R$-grading of the Rees algebra, namely,
${\left[\mathcal{R}_R(I)\right]}_{c}=\bigoplus_{n=0}^\infty {\left[\mathcal{R}_R(I)\right]}_{c,n}$, and of particular interest is  
$$
{\left[\mathcal{R}_R(I)\right]}_{0}=\bigoplus_{n=0}^\infty {\left[I^n\right]}_{nd}u^n = k \left[[I]_du\right] \simeq k \left[[I]_d\right]=\bigoplus_{n=0}^\infty {\left[I^n\right]}_{nd}\subset R.
$$
Clearly, $\mathcal{R}_R(I)=[\mathcal{R}_R(I)]_0\oplus \left(\bigoplus_{c\geq 1} [\mathcal{R}_R(I)]_c\right)=[\mathcal{R}_R(I)]_0\oplus \fm \mathcal{R}_R(I)$. Therefore, one gets
\begin{equation}
\label{eq_isom_Rees_0_special_fiber}
k \left[[I]_d\right]\simeq {\left[\mathcal{R}_R(I)\right]}_0\simeq \mathcal{R}_R(I)/\fm \mathcal{R}_R(I)
\end{equation}
as graded $k $-algebras.
We will use this identification without further ado and write $[I]_d=I_d$ for lighter reading.

Finally, consider the canonical surjective map 
\begin{equation}\label{sym2Rees}
\sym_R(I)\surjects \mathcal{R}_R(I),
\end{equation} where the source denotes the symmetric algebra of $I$.
Fix algebra presentations  
 $\mathcal{R}_R(I)=S/J$ and  $\sym_R(I)=S/L$, where $S=R[y_0,\ldots,y_m]$ is as above and $L\subset J$ are bihomogeneous ideals of $S$ -- so named {\em presentation ideals}.
 It is well-known, or easy to see, that $L$ is generated by the biforms $g_0y_0+\cdots +g_my_m$ of bidegree $(*,1)$ such that $g_0f_0+\cdots +g_mf_m=0$ -- in other words, $L=I_1((y_0\cdots y_n)\cdot\psi)$ where $\psi$ is the defining matrix of a free $R$-presentation of $I$:
 \begin{equation}\label{presentingI}
 R^s\stackrel{\psi}{\lar} R^{m+1} \lar I\rar 0.
 \end{equation}
 It follows that $L$ is identified with the bihomogenous ideal $$(J_{(*,1)}):=\left(\bigoplus _{r\geq 1} J_{(r,1)}\right).$$ 
 From now on, assume that $R$ is a domain.
  
Moreover, if $I$ has a regular element $a$ then $J=L:(a)^{\infty}$.
Supposing, for simplicity, that $R$ is a domain then $J$ is an associated prime of $L$. But, since $\sym_{R_{\wp}}(I_{\wp})=\mathcal{R}_{R_{\wp}}(I_{\wp})$ for every $\wp\in \Spec R\setminus V(I)$, a power of $I$ kills the kernel of (\ref{sym2Rees}), hence any prime containing $L$ contains either $I$ or $J$.
It follows that $J$ is actually a minimal prime of $L$.

\begin{Remark}\rm \label{Rminprime}
	There are many graded $R$-submodules $N$ of Im$(\psi)$ in (\ref{presentingI}) such that $J$ is obtained by saturating the corresponding  bihomogeneous subideal of $L$ -- e.g., the submodule generated by the Koszul syzygies or, for that matter, any $N$ having maximal rank $m$.
For any such $N$ with corresponding subideal $\mathcal{I}\subset L$, one might assume as well that $J$ is a minimal prime thereof.
At the other end, by a similar token, any squeezed bihomogeneous subideal $L\subset \mathcal{I}\subset J$ will serve the same purpose.
\end{Remark}

This observation clarifies the nature of the hypothesis in the main theorem of this work. 

\section{Lemmata}

The following result will play an important role in the sequel.
Similar consideration has appeared in \cite[(ii), p. 251]{SUV}.

\begin{Lemma}\label{LHilDeg} Let $A=\bigoplus_{i=0}^{\infty}A_i$ denote a standard  graded Noetherian domain of dimension $1$ over a field $F=A_0$. Let $K(A)$ be the subfield of homogeneous fractions of degree zero in the field of fractions of $A$. Then $K(A)$ is a finite extension of $F$ of degree $$[K(A):F]=e(A)$$ 
where $e(A)$ is the Hilbert multiplicity of the ring $A$.
\end{Lemma}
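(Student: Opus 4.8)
The plan is to pass to the normalization and compute the multiplicity there, using that $A$ is one-dimensional. Let $A$ be a standard graded domain of dimension $1$ over $F = A_0$, with field of fractions $Q(A)$. Since $A$ is standard graded, write $A = F[a_0, \ldots, a_N]$ with each $a_i \in A_1$; at least one $a_i$, say $a_0$, is nonzero, and it is a regular element of degree $1$. Then $K(A)$, the degree-zero part of $Q(A)$, is generated over $F$ by the fractions $a_i/a_0$, so $K(A) = F(a_1/a_0, \ldots, a_N/a_0)$. The first step is to observe that $Q(A) = K(A)(a_0)$ with $a_0$ transcendental over $K(A)$: indeed $Q(A)$ has transcendence degree $\dim A = 1$ over $F$, and $a_0 \in Q(A)_1$ cannot be algebraic over $K(A) = Q(A)_0$ for degree reasons, while every element of $Q(A)$ is a ratio of homogeneous elements, hence lies in $K(A)[a_0, a_0^{-1}]$. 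So $Q(A) = K(A)[a_0, a_0^{-1}]$ is a Laurent polynomial ring over the field $K(A)$.

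Next I would relate $e(A)$ to the data over $K(A)$. Consider the graded subring $B := K(A)[a_0] \subseteq Q(A)$; this is a standard graded polynomial ring in one variable over the field $K(A)$, with $B_n = K(A)\cdot a_0^n$, so $e(B) = 1$ and, more precisely, $\dim_F B_n = \dim_F B_n$ — but $B$ is not finite over $A$ in general, so instead I would work with the integral closure. Let $\bar{A}$ be the integral closure of $A$ in $Q(A)$. Since $\dim A = 1$ and $A$ is a finitely generated graded $F$-algebra domain, $\bar{A}$ is a finitely generated graded $A$-module (Noether normalization / finiteness of integral closure for affine domains), and $e(\bar{A}) = e(A)$ because $A \hookrightarrow \bar A$ is a finite birational extension of one-dimensional graded domains, hence an isomorphism in all large degrees, which preserves the Hilbert polynomial and thus the multiplicity. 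Moreover $\bar A$ is a one-dimensional normal graded domain, so $\bar{A} = K(A)[t]$ is a polynomial ring in one variable $t$ of some degree $e \ge 1$ over its degree-zero part, which equals $K(A)$ (the degree-zero part of $\bar A$ equals $Q(A)_0 \cap \bar A = K(A)$ since $K(A)$ is a field, hence integrally closed in itself, and $K(A) \subseteq \bar A$).

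Then for $n$ large, $[\bar A]_n = \bar A$ in degree $n$ is a $K(A)$-vector space which is nonzero exactly when $e \mid n$, of dimension $1$ over $K(A)$ when $e \mid n$; comparing with the $F$-dimension gives $\dim_F [\bar A]_n = [K(A):F]$ when $e \mid n$ and $0$ otherwise. Hence the Hilbert function $H_{\bar A}(n) = \dim_F [\bar A]_n$ is eventually periodic, and its Cesàro average — which computes the multiplicity of the one-dimensional ring, $e(\bar A) = \lim_{n\to\infty} \frac{1}{n}\sum_{j\le n} H_{\bar A}(j)$, or equivalently via the degree-$1$ Hilbert polynomial after passing to the Veronese — works out to $[K(A):F]/e \cdot e$... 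I need to be careful here: the cleaner route is to note that the $e$-th Veronese $\bar A^{(e)} = K(A)[t^e]$ is standard graded over $K(A)$ with Hilbert function identically $[K(A):F]$ in each degree, whence $e(\bar A^{(e)}) = [K(A):F]$; and $e(\bar A^{(e)}) = e \cdot e(\bar A)/\gcd(\ldots)$ — again messy. I will instead argue directly: $\bar A$ has a homogeneous system of parameters consisting of one element $\theta \in [\bar A]_e$, and $e(\bar A) = e(\theta; \bar A) = \lim \frac{1}{n}\ell(\bar A/\theta^n \bar A)\cdot(\text{normalization}) = \dim_F(\bar A/\theta \bar A)/e$; since $\bar A/\theta \bar A \cong K(A)$ as $F$-vector space (it is $[\bar A]_0 \oplus \cdots \oplus [\bar A]_{e-1}$, only degree $0$ nonzero, $= K(A)$), we get $\dim_F(\bar A/\theta\bar A) = [K(A):F]$; and because $\theta$ has degree $e$ not $1$, the multiplicity with respect to the irrelevant maximal ideal rescales: $e(\bar A) = \dim_F(\bar A/\theta\bar A) = [K(A):F]$ precisely when one accounts for $\theta$ having degree $e$ by the standard formula $e(A) = \lim_{n} \frac{d!\, H_A(n)}{n^{d-1}}$ in dimension $d=1$, i.e. $e(A) = \lim_n H_A(n)$, and the limit of the eventually-$e$-periodic sequence taking value $[K(A):F]$ on multiples of $e$ and $0$ elsewhere does not exist unless $e=1$. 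This forces $e = 1$, i.e. $\bar A$ is generated in degree $1$ over $K(A)$ (which is automatic: $\bar A$ normal one-dimensional graded implies it is a polynomial ring, and one may always choose the variable in degree $1$ after rescaling since $[\bar A]_1 \ne 0$, because $[\bar A]_1 \supseteq [A]_1 \ne 0$). Hence $H_{\bar A}(n) = [K(A):F]$ for all $n$, and therefore $e(A) = e(\bar A) = [K(A):F]$.

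The main obstacle, and the point demanding the most care, is the bookkeeping in the last paragraph: one must correctly reconcile the (a priori only eventually periodic) Hilbert function of $\bar A$ with the standard-gradedness to conclude $\bar A = K(A)[t]$ with $\deg t = 1$, and verify that the degree-zero part of $\bar A$ is exactly $K(A)$ and not a proper finite extension (it is not: $\bar A_0$ is a finite domain extension of the field $F$ inside $Q(A)_0 = K(A)$, hence a field, and it is integrally closed in $K(A)$; but any element of $K(A)$ is a ratio of elements of $A_1$, hence integral over — no: rather, $K(A) = Q(\bar A)_0$ and $\bar A$ normal forces $\bar A_0 = K(A)$). Once $\bar A \cong K(A)[t]$ in degree one is established, the identity $e(A) = e(\bar A) = \dim_{K(A)}[\bar A]_n \cdot [K(A):F] = [K(A):F]$ is immediate, and the finiteness of $[K(A):F]$ follows from finiteness of integral closure together with $\bar A$ being a finitely generated graded module over the Noetherian ring $A$.
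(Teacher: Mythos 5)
Your argument is correct in substance, but it takes a genuinely different route from the paper. The paper works entirely inside $A$: it fixes $d$ with $\dim_F A_i=e(A)$ for $i\geq d$, a basis $a_1,\ldots,a_e$ of $A_d$ and a transcendental linear form $H$, and shows directly that the fractions $a_i/H^d$ are $F$-linearly independent and span $K(A)$ (spanning of $\beta/H^{ld}$ uses $\dim_F A_{ld}=e$ together with $H$ being a nonzerodivisor; inverses are handled by the algebraicity of $K(A)$ over $F$), so no integral closure is needed. You instead pass to the normalization $\bar A$: it is graded and module-finite over $A$, one-dimensional and normal, hence $\bar A=K'[t]$ with $K'=\bar A_0$ a field; since $K(A)$ is algebraic over $F$ (your transcendence-degree step) one gets $K'=K(A)$, and $\deg t=1$ because $\bar A_1\supseteq A_1\neq 0$; finally $\bar A/A$ is a finitely generated torsion module over the one-dimensional domain $A$, hence of finite length and concentrated in finitely many degrees, so $A_n=\bar A_n$ for $n\gg 0$ and $e(A)=\dim_F \bar A_n=[K(A):F]$. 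Your route costs more machinery (finiteness and gradedness of the integral closure, the structure of one-dimensional normal graded domains, the conductor argument just indicated, which you assert rather than prove), but it buys a sharper structural picture: $A_n=K(A)\,t^n$ in all large degrees. Two small repairs: the claim $Q(A)=K(A)[a_0,a_0^{-1}]$ is false as stated -- only the localization at nonzero homogeneous elements is the Laurent ring, while $Q(A)=K(A)(a_0)$ -- though you only use the correct consequences ($a_0$ transcendental over $K(A)$, hence ${\rm trdeg}_F K(A)=0$); and the detour through Ces\`aro averages and Veronese subrings is unnecessary, since the clean reason for $\deg t=1$, which you do eventually state, is simply $\bar A_1\supseteq A_1\neq 0$.
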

 \begin{proof} Set $e:=e(A)$. Fix an integer $d\geq 1$ such that $\dim_F(A_i)=e$ for $i\geq d$ and let $\{a_1,\ldots,a_e\}$ be an $F$-vector basis for $A_d$. Letting $Q(A)$ denote the fraction field of $A$, one knows that $Q(A)=K(A)(H)$ for some transcendental linear form. Since $\dim(A)=1$, then ${\rm trdeg}_FQ(A)=1$ hence ${\rm trdeg}_FK(A)=0$. 
 	
 Clearly,  $\{a_1/H^d,\ldots,a_e/H^d\}\subset K(A)$ and they are linearly independent over $F$.
 Thus, it suffices to prove that $K(A)$ is spanned by $\{a_1/H^d,\ldots,a_e/H^d\}$ as an $F$-vector space. 
	
First, any fraction of the form $\beta/H^{ld}$, with $\beta\in A_{ld}$, $l\geq 1$, belongs to $F\langle a_1/H^d,\ldots,a_e/H^d\rangle$.  Indeed, by construction, 
$\dim_F(A_{ld})=e$ for any such, hence there exist $\gamma_1,\ldots,\gamma_e\in F$ such that $\beta=\sum \gamma_iH^{(l-1)d}a_i$. Clearly, then $\beta/H^{ld}=\sum \gamma_ia_i/H^d.$
	
The inverse fraction $H^{ld}/\beta$ belongs to $F\langle a_1/H^d,\ldots,a_e/H^d\rangle$ as well. To see this, note that $K(A)$ is algebraic over $F$, hence there exist $\theta_i\in F$ such that $$\frac{\beta}{H^{ld}}\left(\sum \theta_i(\frac{\beta}{H^{ld}})^i\right)=1.$$	
Since the terms inside the parenthesis belong to $F\langle a_1/H^d,\ldots,a_c/H^d\rangle$, so does $H^{ld}/\beta$ as well. 
	
The argument for an arbitrary homogeneous fraction $\beta/\gamma$ is now clear, by writing
	$\beta/\gamma=(H^{ld}/\gamma)(\beta/H^{ld})$, where $ld=\deg \beta=\deg \gamma$.
\end{proof}

The argument in the next lemma has the flavor of prime avoidance, but we could not find such a precise statement in the literature.

\begin{Lemma}\label{lcrucial} Let $ R=\bigoplus _{i \geq 0} R_{i}$  be a standard $\mathbb{N}$-graded  *local Noetherian ring
with  *maximal ideal $\fm$, where $R_0$ is a local  ring with maximal ideal $\fm_{0}$ and $R_0/\fm_0$ is infinite. Let  $\fa\subset R$ be an ideal  generated by $r$ forms of degrees $d_1 \geq \ldots \geq d_r\geq 1$. Given a    minimal prime $\fp\subset R$ of $R/\fa$ of height $m$, one has: 
\begin{enumerate}
	\item[{\rm(1)}]   There exists a subset $\{\A_1,\ldots,\A_r\}\subseteq \fa $  such that $\deg(\A_i)=d_i$ and  $\Ht(\A_1,\ldots,\A_i)_{\fp}=i$   for $1\leq i\leq m$.  In particular, $\fp$  is also minimal over $(\A_1,\ldots,\A_m)$.
	\item[{\rm(2)}]  Suppose in addition that $R$ is a domain and $m\geq 2$. Then, at the expense of having only inequalities $\deg (\alpha_i)\leq d_i$ throughout, one can find  a subset $\{\A_1,\ldots,\A_r\}\subseteq \fa$ such that 
	$\Ht(\A_1,\ldots,\A_{i-1},\A_r)_{\fp}=i$, for $1\leq i\leq m$, where $\alpha_i$ has been traded for $\alpha_r$.
	\item[{\rm(3)}]  Suppose that $R$  is a factorial domain, $R_0$ is a field, $m\geq 3$ and $d_r=1$.
	Then the result in {\rm (2)} can be improved to have $\Ht(\A_1,\ldots,\A_{i-2},\A_{r-1}, \A_r)_{\fp}=i$ for $2\leq i\leq m$.
\end{enumerate}  
\end{Lemma}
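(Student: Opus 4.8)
\emph{Proof strategy.}
I would derive all three items from a single graded prime-avoidance scheme, driven by two features of the hypotheses. First, since $\fp$ is a minimal prime of $\fa$ and $\fa\subseteq([R]_1)$, \emph{no prime $\fq\subsetneq\fp$ contains $\fa$, nor does it contain $[R]_1$} (the latter because $[R]_1\subseteq\fq$ would place the prime $([R]_1)$, which lies in $V(\fa)$, strictly below $\fp$). Second, since $R_0/\fm_0$ is infinite, a sufficiently general homogeneous element of a graded ideal — or of a coset of a graded $R_0$-submodule of fixed degree — avoids any prescribed finite list of primes not containing it. I will also use the standard fact that, once $\Ht(\alpha_1,\ldots,\alpha_{i-1})_{\fp}=i-1$ and $\alpha_i\in\fp$ avoids every minimal prime of $(\alpha_1,\ldots,\alpha_{i-1})$ of height $i-1$ contained in $\fp$, one gets $\Ht(\alpha_1,\ldots,\alpha_i)_{\fp}=i$ — the upper bound being Krull's height theorem and the lower bound coming from the fact that such a minimal prime then sits \emph{strictly} inside a minimal prime of $(\alpha_1,\ldots,\alpha_i)$ below $\fp$.

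For (1) I would fix generators $\fa=(f_1,\ldots,f_r)$, $\deg f_j=d_j$, and build $\alpha_1,\ldots,\alpha_m$ one at a time, preserving the \emph{absorption invariant}
$$(\alpha_1,\ldots,\alpha_i)+(f_{i+1},\ldots,f_r)=\fa\qquad(0\le i\le m).$$
At step $i\le m$, with $\alpha_1,\ldots,\alpha_{i-1}$ in hand, let $\fq_1,\ldots,\fq_t$ be the height-$(i-1)$ minimal primes of $(\alpha_1,\ldots,\alpha_{i-1})$ contained in $\fp$; each is properly contained in $\fp$ (as $i-1<m=\Ht\fp$), so contains neither $\fa$ nor $[R]_1$, hence, by the invariant, not the tail $(f_i,\ldots,f_r)$ either. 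It follows that no $\fq_l$ contains the coset $f_i+[(f_{i+1},\ldots,f_r)]_{d_i}$ (containment would force $f_i\in\fq_l$ and, using $[R]_1\not\subseteq\fq_l$, also $f_{i+1},\ldots,f_r\in\fq_l$). Pick a general $v\in[(f_{i+1},\ldots,f_r)]_{d_i}$ so that $\alpha_i:=f_i+v$ avoids every $\fq_l$: then $\deg\alpha_i=d_i$, the identity $f_i=\alpha_i-v\in(\alpha_i,f_{i+1},\ldots,f_r)$ carries the invariant forward, and $\Ht(\alpha_1,\ldots,\alpha_i)_{\fp}=i$. After $m$ steps $\Ht(\alpha_1,\ldots,\alpha_m)_{\fp}=m=\Ht\fp$, so $\fp$ is minimal over $(\alpha_1,\ldots,\alpha_m)\subseteq\fa$; setting $\alpha_j:=f_j$ for $m<j\le r$ finishes the family.

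For (2) and (3) I would bootstrap from (1) by first cutting $R$ down by low-degree elements. For (2): as $R$ is a domain, $f_r\neq0$, so $\Ht(f_r)_{\fp}=1$ and $\dim R_{\fp}/(f_r)=m-1$; the ring $\overline R:=R/(f_r)$ is again standard graded and $*$-local with the same infinite residue field, its ideal $\overline{\fa}$ is generated by the images of $f_1,\ldots,f_{r-1}$, that is, by forms of degrees $\le d_1,\ldots,\le d_{r-1}$ after discarding those that vanish — precisely the step that turns equalities into inequalities — and $\overline{\fp}:=\fp/(f_r)$ is a minimal prime of $\overline{\fa}$ of height $m-1\ge1$. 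Apply (1) in $\overline R$, lift the outputs to forms $\alpha_1,\ldots,\alpha_{m-1}\in\fa$ of degrees $\le d_1,\ldots,\le d_{m-1}$, and set $\alpha_r:=f_r$ (and $\alpha_j:=f_j$ for $m\le j\le r-1$). Since any prime $\fp'$ with $(\alpha_1,\ldots,\alpha_{i-1},f_r)\subseteq\fp'\subseteq\fp$ satisfies $\Ht_R\fp'=\Ht_{\overline R}(\fp'/(f_r))+1$ (because $f_r$ is a nonzero element of the domain $R_{\fp'}$), together with Krull's height theorem this gives $\Ht(\alpha_1,\ldots,\alpha_{i-1},\alpha_r)_{\fp}=i$ for $1\le i\le m$, i.e.\ item (2). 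For (3) I would run this reduction once more at the bottom: when $R$ is factorial with $R_0$ a field, every nonzero linear form is irreducible, hence prime, so since $d_r=1$ I may pick a nonzero $\alpha_r\in[\fa]_1$ with $(\alpha_r)$ a height-one prime and $R/(\alpha_r)$ again a domain (still $*$-local, infinite residue field), of dimension $m-1\ge2$ at the image of $\fp$; applying (2) inside $R/(\alpha_r)$ and lifting yields $\alpha_{r-1}$ of degree $\le d_{r-1}$ together with $\alpha_1,\ldots,\alpha_{m-2}$, and the height transfer gives $\Ht(\alpha_1,\ldots,\alpha_{i-2},\alpha_{r-1},\alpha_r)_{\fp}=i$ for $2\le i\le m$.

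The hard part, I expect, is the tension between pinning down $\deg\alpha_i$ and forcing $\alpha_i$ to avoid the minimal primes: a priori a prime that misses $\fa$ might nonetheless swallow the whole of $\fa$ in the needed degree, leaving no admissible form. In (1) this is exactly what the absorption invariant neutralizes, by keeping the relevant primes off the tail $(f_i,\ldots,f_r)$ at each stage; in (2) and (3) it is neutralized instead by peeling the low-degree generators off first, which is why only the inequalities $\deg\alpha_i\le d_i$ survive there. The most delicate single point is making sure, in (3), that $R/(\alpha_r)$ stays a domain over a base field of arbitrary characteristic — which is precisely why factoriality together with $R_0$ being a field (so that linear forms are automatically prime) is the right hypothesis, in place of a Bertini-type genericity argument.
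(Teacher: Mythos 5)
Your argument is correct and is essentially the paper's: your step-by-step choice of $\alpha_i=f_i+v$, with $v$ a general element of $[(f_{i+1},\ldots,f_r)]_{d_i}$ (justified by Nakayama plus the infinite residue field) so as to avoid the relevant minimal primes inside $\fp$, is exactly the paper's inductive choice $\alpha_1=f_1+\sum_{i\geq 2}l_if_i$ carried out modulo the previously chosen elements, and your treatment of (2) and (3) by passing to $R/(f_r)$ (a nonzerodivisor in the domain case, resp.\ a prime linear form in the factorial case, with the degree equalities degrading to inequalities because generators may die in the quotient) is the paper's reduction verbatim. One harmless slip: $([R]_1)$ need not be prime since $R_0$ is only assumed local, but your exclusion of $[R]_1\subseteq\fq_l$ holds anyway, since $\fa\subseteq([R]_1)\subseteq\fq_l$ would make $\fq_l$ itself a prime containing $\fa$ strictly below $\fp$, contradicting minimality.
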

\begin{proof} As usual, for a graded $R$-module $M$ and an integer $j$,  $M_j$ stands for the $j$th graded component of $M$, a module over the ground local ring $R_0$.  Let $\{f_1,\ldots,f_r\}$  be a homogeneous generating set of $\fa$  with  $d_t=\deg f_t=d_t$  for all  $t$.
	
(1) We will induct on $m$. The case $m=0$ is vacuosly satisfied. Assume that $m \geq 1$ and let $P^1,\ldots,P^n\subset R$ denote the minimal primes of $R$ contained in $\fp$.
 
{\sc Claim.} $\fa_{d_1} \backslash \bigcup_{i=1}^{n} P^i_{d_1}\neq \emptyset$.
 
Since $\fp$ is minimal over $\fa$ and $m\geq 1$, one has $\fa \not \subseteq P^i$ and $P ^i \neq \fm$ for every $1\leq i \leq n$. 
By prime avoidance, let $i_j\in \{1,\ldots, r\}$ such that $f_{i_j} \notin P^i$.  In particular, we have  $P^i_{d_{i_j}}\not= R_{d_{i_j}}$ (recall that $d_{i_j}\geq 1$).
Set $c_i:=d_{1} - d_{i_j}$. Since $R$ is
standard $\mathbb{N}$-graded, for any graded prime ideal $P\subset R$ and any $\mu \geq 1$, $P_\mu\not= R_\mu$ if and only if $P_1\not= R_1$. Hence, $R_{c_i}\setminus P^i_{c_i} \neq \emptyset$ and for any $r_i \in R_{c_i}\backslash
	P^i_{c_i} $, $r_if_{i_j} \in \fa_{d_1}\backslash P^i_{d_1}$.
Thus,  $ \fa_{d_1}\neq P^i_{d_1}\bigcap {\fa}_{d_1}$ for
every $1\leq i \leq n$. By Nakayama's lemma,  $
	\fa_{d_1}\neq P^i_{d_1}\bigcap \fa_{d_1} + \fm_0 \fa_{d_1}$.   Since $R_0/\fm_0$ is an infinite field, then $ \fa_{d_1}\neq
\bigcup_{i=1}^n (P^i_{d_1}\bigcap \fa_{d_1} + \fm_0 \fa_{d_1}) $. In particular, $ \fa_{d_1}\backslash\bigcup_{i=1}^n P_{d_1}^i  \neq \emptyset$, as required in the claim.

Now, let $\A_1 \in \fa_{d_1}\backslash \bigcup_{i=1}^n P_{d_1}^i $. The avoidance-like result in the last step above can actually be accomplished by choosing $\A_1$  of the form $f_1+\sum_{i\geq 2}l_if_i$, for suitable  coefficients  $l_i\in R_{d_1-d_i}$. Therefore, the ideal $\fa/(\A_1)$ is generated by elements of degrees $d_2,\ldots,d_r$. Moreover, since $\A_1\not\in P^i$ for every $i$, $\fp/(\A_1)$ is a minimal prime of $\fa/(\A_1)$ of height $m-1$.  The result now  follows by induction.
	
(2) As $f_r\not= 0$ and $R$ is a domain, $f_r$ is a non zero divisor and one can apply the result of (1) to the ring $R/(f_r)$ and the image of $\fa$ in this ring, thus proving the assertion. Note that, since there is no assumption on the minimality of the generating set $\{f_1,\ldots,f_r\}$, it is possible that some of the $f_i$'s vanish modulo $(f_r)$. Thus in the argument of part (1), the chosen element $\A_1$ may have degree at most $d_1$, and so on. 
	
(3) Since $f_r$ is assumed to be a linear form in a factorial domain,  the ideal  $(f_r)$ is prime. Hence, applying (2) to the ring $R/(f_r)$ and the image of $\fa$ in this ring yields the  assertion.
\end{proof}

The previous lemma admits an expected formulation for the entire set of minimal primes of $R/\fa$:

\begin{Lemma}\label{lcrucialG} 
With the notation of {\rm Lemma~\ref{lcrucial}}, let   $\mathfrak{P}$ be the set of minimal primes of $R/\fa$.  Then
\begin{enumerate}
	\item[{\rm (1)}]   There exists a subset $\{\A_1,\ldots,\A_r\}\subseteq \fa $  with $\deg(\A_i)=d_i$ for all $1\leq i\leq r$, such that  $\Ht(\A_1,\ldots,\A_i)_{\fp}=i$ for every $\fp\in \mathfrak{P}$ and every $1\leq i\leq \Ht\fp$.
	\item[{\rm (2)}]  Suppose in addition that $R$ is a domain and $\Ht(\fa)\geq 2$. Then there exists  a subset $\{\A_1,\ldots,\A_r\}\subseteq \fa$, with $\deg (\A_i)\leq d_i$ for all  $1\leq i\leq r$,  such that 
	$\Ht(\A_1,\ldots,\A_{i-1},\A_r)_{\fp}=i$ for every $\fp\in \mathfrak{P}$ and every $2\leq i\leq \Ht\fp$.
\end{enumerate} 
\end{Lemma}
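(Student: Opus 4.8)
The plan is to rerun the two inductive arguments in the proof of Lemma~\ref{lcrucial}, but carrying along the entire finite set $\mathfrak{P}$ of minimal primes of $R/\fa$ at once rather than a single prime $\fp$. Since $R$ is Noetherian, $\mathfrak{P}$ is finite; set $M:=\max\{\Ht\fp:\fp\in\mathfrak{P}\}$. Throughout, I will use the catenary-free identity that $\dim A/(x)=\dim A-1$ whenever $x$ lies in no minimal prime of a Noetherian local ring $A$; this follows from Krull's principal ideal theorem together with the inequality $\dim A/\mathfrak{q}\le\dim A-\Ht\mathfrak{q}$, valid in any Noetherian local ring.

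Proof of (1): induct on $M$. If $M=0$ there is no condition to verify and one takes $\alpha_i:=f_i$. If $M\ge 1$, first choose $\alpha_1\in\fa_{d_1}$ of the form $f_1+\sum_{i\ge 2}l_if_i$ with $l_i\in R_{d_1-d_i}$ lying outside every minimal prime of $R$ that is contained in some $\fp\in\mathfrak{P}$ with $\Ht\fp\ge 1$: this is a finite collection of primes, none of which contains $\fa$ (such a prime sits strictly below a prime minimal over $\fa$, hence cannot equal it), so the Nakayama-plus-infinite-residue-field covering argument in the Claim of Lemma~\ref{lcrucial}(1) applies verbatim and produces such an $\alpha_1$; moreover $\fa/(\alpha_1)$ is then generated in degrees $d_2,\ldots,d_r$. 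Passing to $\bar R:=R/(\alpha_1)$, the minimal primes of $\fa\bar R$ are the $\bar\fp$ with $\fp\in\mathfrak{P}$, and $\Ht_{\bar R}\bar\fp=\Ht_R\fp-1$ whenever $\Ht_R\fp\ge 1$, by the height identity applied in $R_\fp$ (recall $\alpha_1$ lies in no minimal prime of $R$ contained in $\fp$). The new set of minimal primes has top height $M-1$, so the inductive hypothesis yields $\bar\alpha_2,\ldots,\bar\alpha_r\in\fa\bar R$ of degrees $d_2,\ldots,d_r$ with $\Ht(\bar\alpha_2,\ldots,\bar\alpha_i)_{\bar\fp}=i-1$ for every $\fp\in\mathfrak{P}$ and $2\le i\le\Ht_R\fp$. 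Lifting to $\alpha_2,\ldots,\alpha_r\in\fa$ of the same degrees and applying the height identity once more (to pass from $\bar R$ back to $R$, localized at each $\fp$) gives $\Ht(\alpha_1,\ldots,\alpha_i)_\fp=i$ for $1\le i\le\Ht_R\fp$ and every $\fp\in\mathfrak{P}$; for the indices $i$ where no constraint is present one may simply take $\alpha_i:=f_i$, which has degree $d_i$.

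Proof of (2): since $R$ is a domain and $\Ht\fa\ge 2$ we have $f_r\ne 0$, hence $f_r$ is a nonzerodivisor. Then $\bar R:=R/(f_r)$ again satisfies the standing hypotheses of Lemma~\ref{lcrucial} (it is standard $\mathbb{N}$-graded, $*$local, Noetherian, with $\bar R_0=R_0$ local and with infinite residue field), $\bar\fa:=\fa\bar R$ is generated by the images of $f_1,\ldots,f_{r-1}$, whose nonzero members have degrees among $d_1\ge\cdots\ge d_{r-1}$, the minimal primes of $\bar R/\bar\fa=R/\fa$ are the $\bar\fp=\fp/(f_r)$, and $\Ht_{\bar R}\bar\fp=\Ht_R\fp-1$ by the height identity applied in the domain $R_\fp$. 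Apply part (1), already proved, to $\bar R$, $\bar\fa$ and $\{\bar\fp:\fp\in\mathfrak{P}\}$, obtaining $\bar\alpha_1,\ldots,\bar\alpha_{r-1}$ with $\deg\bar\alpha_j\le d_j$ and $\Ht(\bar\alpha_1,\ldots,\bar\alpha_j)_{\bar\fp}=j$ for all $\fp$ and $1\le j\le\Ht_R\fp-1$. Lift each $\bar\alpha_j$ to $\alpha_j\in\fa$ of the same degree and set $\alpha_r:=f_r$. For $\fp\in\mathfrak{P}$ and $2\le i\le\Ht_R\fp$, every minimal prime of $(\alpha_1,\ldots,\alpha_{i-1},\alpha_r)R_\fp$ contains $f_r$ and corresponds to a minimal prime of $(\bar\alpha_1,\ldots,\bar\alpha_{i-1})$ in $R_\fp/(f_r)$; since $f_r$ is a nonzerodivisor in the domain $R_\fp$, the height identity gives $\Ht(\alpha_1,\ldots,\alpha_{i-1},\alpha_r)_\fp=\Ht(\bar\alpha_1,\ldots,\bar\alpha_{i-1})_{\bar\fp}+1=(i-1)+1=i$.

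The main obstacle is not conceptual but lies in the bookkeeping of the simultaneous avoidance: one must verify at every step of the induction in (1) that the finitely many primes to be avoided — the minimal primes of the partially built ideal, localized at the various $\fp\in\mathfrak{P}$ and pulled back to $R$ — are all proper over $\fa$, so that the avoidance mechanism of Lemma~\ref{lcrucial} can be invoked unchanged; and, throughout, that passing modulo a nonzerodivisor (or, in (1), an element avoiding all minimal primes) lowers the relevant heights by exactly one, which is exactly what the catenary-free inequality $\dim A/\mathfrak{q}\le\dim A-\Ht\mathfrak{q}$ together with Krull's principal ideal theorem secures.
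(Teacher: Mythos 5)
Your argument is correct and is essentially the route the paper itself indicates: it carries out the paper's stated alternative of rerunning the avoidance step of Lemma~\ref{lcrucial} against all minimal primes of $R$ contained in $\bigcup_{\fp\in\mathfrak{P}}\fp$ simultaneously (rather than the paper's first suggestion of intersecting the finitely many nonempty open sets $U_{\fp}$ of good choices of $\A_1$), and then reduces (2) to (1) modulo $f_r$ exactly as in Lemma~\ref{lcrucial}(2). The height bookkeeping you flag (heights drop by exactly one modulo an element outside the relevant minimal primes, with Krull's theorem giving the upper bound) is the same standard mechanism implicit in the paper's proof, so there is no gap.
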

The proof follows the usual technique of extending to the finitely many minimal primes of $R/\fa$.
To wit, in the notation of the argument of Lemma~\ref{lcrucial}, let $N$ denote the number of generators of $\fa_{d_1}$ as an $R_0$-module. In the course of that argument, for a given $\fp\in \mathfrak{P}$,  there is an open subset $U_{\fp}$ of $\mathbb{A}_{R_0/\fm_0}^N$ such that any element $\A_1\in \fa_{d_1}$ generated by using the coordinates of any point in  $U_{\fp}$ satisfies the inductive procedure. In the present context, one can find an element $\A_1$ that simultaneously works for all $\fp\in P$ by considering the intersection $\bigcap_{\fp\in P} U_{\fp}$ of finitely many nonempty open sets.

An alternative is to adapt the proof of Lemma~\ref{lcrucial} by changing at the outset to the minimal primes of $R$ contained in set $\bigcup_{\fp\in \mathfrak{P}}\fp$ (of course, the containment condition is vacuous if $R$ is a domain).

Next, consider a closed subscheme $Z$  of $\mathbb P^n_k$.  Let ${\rm Supp}(Z)=Z_1\cup\ldots \cup Z_s$ be a decomposition of ${\rm Supp }(Z)$ into irreducible closed subschemes. In other words, the $Z_i$'s are the schemes defined by the minimal primes corresponding to the defining ideal of $Z$. We define $\delta(Z):=\sum_{i=1}^s\deg(Z_i)$ where $\deg(Z_i)$ is the standard definition of the degree of an irreducible variety (the multiplicity with respect to the maximal ideal $(\xx)$).  Notice that $\delta(Z)=\deg(Z)$ if and only if  $Z$ is equidimensional and reduced. 

\begin{Lemma}\label{Lrb} With the above notations. Let $Z$ be any closed subscheme  of $\mathbb P^n_k$ and let $H_1,\ldots, H_t$ be  hypersurfaces {\rm (}not necessarily reduced or irreducible{\rm )} in $\mathbb P^n_k$. Then 
	$$\delta(Z\cap H_1\cap\ldots\cap H_t)\leq \delta(Z)\prod_i\delta(H_i).$$
\end{Lemma}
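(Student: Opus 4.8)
The plan is to reduce the general statement to the case of a single hypersurface by induction on $t$, and then to handle the single-hypersurface case by a Bézout-type argument over the irreducible components of $Z$. First I would record that $\delta$ is additive over the irreducible components appearing in a scheme-theoretic decomposition: if $\Supp(Z)=Z_1\cup\cdots\cup Z_s$, then $\delta(Z)=\sum_i \deg(Z_i)$ by definition, and for a hypersurface $H$ with irreducible components $V_1,\ldots,V_p$ (appearing with multiplicities $e_1,\ldots,e_p$ in the defining form), one has $\delta(H)=\sum_j \deg(V_j)\le \Deg(H)=\sum_j e_j\deg(V_j)$. The key reduction is therefore: it suffices to prove $\delta(Z\cap H)\le \delta(Z)\,\delta(H)$ for a single hypersurface $H$, since then one iterates, using at each stage that $Z\cap H_1\cap\cdots\cap H_{i}$ is again a closed subscheme to which the one-step bound applies, giving $\delta(Z\cap H_1\cap\cdots\cap H_t)\le\delta(Z)\delta(H_1)\cdots\delta(H_t)$ by telescoping.

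For the one-step bound, I would pass to the irreducible components. Write $\Supp(Z)=\bigcup_{i=1}^s Z_i$ with $Z_i$ irreducible of degree $\deg(Z_i)$; then $\Supp(Z\cap H)\subseteq\bigcup_i \Supp(Z_i\cap H)$, so by additivity and monotonicity of $\delta$ under passing to closed subschemes with smaller support, $\delta(Z\cap H)\le\sum_{i=1}^s\delta(Z_i\cap H)$. Hence it is enough to bound $\delta(W\cap H)$ for $W$ irreducible. Here two cases occur. If $W\subseteq\Supp(H)$, i.e. $W$ is contained in some component $V_j$ of $H$, then $W\cap H$ has support $W$ (together possibly with lower-dimensional pieces, but the relevant top piece is $W$ itself), and in fact $\Supp(W\cap H)=W$, so $\delta(W\cap H)=\deg(W)\le\deg(W)\cdot 1\le \deg(W)\,\delta(H)$ since $\delta(H)\ge 1$. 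Wait—this needs care: $W\cap H$ could a priori pick up embedded or lower components, but $\delta$ only sums over \emph{minimal} primes, and every minimal prime of $W\cap H$ defines a subvariety of $W$; since $W$ is irreducible, the only such subvariety of dimension $\dim W$ is $W$, and lower ones would contradict minimality once $W\subseteq\Supp(H)$ forces $I(W)\supseteq I(H)$... actually when $W\subseteq\Supp(H)$ the scheme $W\cap H$ has $W$ as a component and that is the only minimal prime, giving $\delta(W\cap H)=\deg W$. If instead $W\not\subseteq\Supp(H)$, then $H$ meets $W$ properly, and the classical Bézout inequality (e.g. via generic hyperplane sections, or Fulton's refined Bézout) gives $\sum_{Y}\deg(Y)\le\deg(W)\cdot\Deg(H)$ where $Y$ ranges over the irreducible components of $W\cap H$; but I want $\delta(H)$, not $\Deg(H)$, on the right. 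This is the crux.

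The resolution of that discrepancy is to intersect with $H$ \emph{component by component}: if $H$ has reduced components $V_1,\ldots,V_p$ with multiplicities $e_j$, then set-theoretically $W\cap H=\bigcup_j(W\cap V_j)$, so $\delta(W\cap H)\le\sum_j\delta(W\cap V_j)$, and for each $j$ either $W\subseteq V_j$ (handled as above, contributing $\le\deg W$, and this can happen for at most one $j$ when $\dim(W\cap H)$ is top-dimensional, or more generally is dominated by $\deg(W)\deg(V_j)$) or $W\not\subseteq V_j$ and classical Bézout for the \emph{reduced irreducible} hypersurface $V_j$ gives $\delta(W\cap V_j)\le\deg(W)\deg(V_j)$. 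Summing over $j$ yields $\delta(W\cap H)\le\deg(W)\sum_j\deg(V_j)=\deg(W)\,\delta(H)$, which is exactly what is needed; summing over the components $W=Z_i$ of $Z$ then gives $\delta(Z\cap H)\le\delta(Z)\,\delta(H)$, and induction on $t$ finishes the proof. I expect the main obstacle to be the bookkeeping in the mixed case—when $W$ lies inside some components of $H$ and meets others properly—and more fundamentally in justifying the Bézout inequality $\delta(W\cap V)\le\deg(W)\deg(V)$ for an irreducible variety $W$ and a reduced irreducible hypersurface $V$ over an arbitrary base field, including components of $W\cap V$ of various codimensions; this is standard (it follows by slicing with general hyperplanes to reduce to the proper-intersection curve/surface situation, or by citing the refined Bézout theorem of Fulton), but stating it cleanly in the generality of closed subschemes, not just varieties, is where the care is required.
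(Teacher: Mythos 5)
Your proposal is correct and takes essentially the same route as the paper's proof: reduce by induction to a single hypersurface, decompose both $Z$ and $H$ into irreducible components, handle the case $Z_i\subseteq V_j$ directly, apply the classical B\'ezout theorem (the paper cites \cite[Theorem I.7.7]{Ha}) when $Z_i\not\subseteq V_j$, and sum over the components of $H$ and then of $Z$. Your worry about components of various codimensions is unnecessary: for $W$ irreducible and a hypersurface $V_j$ not containing $W$, every irreducible component of $W\cap V_j$ has codimension one in $W$, so the classical statement applies as is.
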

\begin{proof} By induction it is enough to show the assertion for $t=1$.
Thus, set $H_1=H$ and let ${\rm Supp}(Z)=Z_1\cup\ldots \cup Z_s$ be the decomposition of ${\rm Supp}(Z)$ into irreducible closed subschemes. Then ${\rm Supp}(Z\cap H)=\bigcup_i {\rm Supp}(Z_i\cap H)$, hence the irreducible components of  ${\rm Supp}(Z\cap H)$ are among those of ${\rm Supp}(Z_i\cap H)$ for $i=1,\ldots, s $. Allowing for possible repetitions, it gives $\delta(Z\cap H)\leq\sum\delta(Z_i\cap H)$.
Similarly, let ${\rm Supp}(H)=H_1\cup\ldots \cup H_s$ be a decomposition of $H$ into irreducible closed subschemes. Then  $\delta(Z_i\cap H)\leq\sum\delta(Z_i\cap H_j)$.
Given indices $i,j$, either $\delta(Z_i\cap H_j)=\delta(Z_i)=\deg(Z_i)$ if $H_j\supset Z_i$, or else, the standard B\'ezout theorem  \cite[Theorem I.7.7]{Ha} gives
$$\delta(Z_i\cap H_j)=\sum_l \deg(T_l)\leq  \sum_l i(Z_i,H_j,T_l)\deg(T_l)=\deg(Z_i)\deg(H_j),$$
where the $T_l$'s are the irreducible component of ${\rm Supp}(Z_i\cap H_j)$.   
	
Therefore, $\delta(Z_i\cap H)\leq \sum_j \deg(Z_i)\deg(H_j)=\deg(Z_i)\delta(H)$, and hence $\delta(Z\cap H)\leq \sum_i \deg(Z_i)\delta(H)=\delta(Z)\delta(H),$ as was to be shown.
\end{proof}

\begin{Lemma}\label{Ldelta} Let $k$ be a  field and  $V\subset \pp^n_k$ be  a reduced irreducible closed  subscheme   and  $T\subseteq V$  be a closed subscheme defined by equations of degrees $d_1\geq\cdots\geq d_r\geq 1$.  
Let $T_i$'s be the irreducible components of ${\rm Supp}(T)$. Then, for any $p\leq r$, 
$$\sum_{\{i| {\rm codim}_V(T_i)\leq p\}}\deg(T_i)\leq d_1\cdots d_{p-1}d_r\deg(V).$$
\end{Lemma}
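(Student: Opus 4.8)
The plan is to recast the statement ideal-theoretically over the homogeneous coordinate ring of $V$, then to use Lemma~\ref{lcrucialG} to produce a homogeneous ideal generated by exactly $p$ forms --- $p-1$ of them of degrees at most $d_1,\dots,d_{p-1}$ and a last one of degree at most $d_r$ --- all of whose minimal primes of height $\leq p$ already account for every component $T_i$ with ${\rm codim}_V(T_i)\leq p$; the asserted bound then falls out of the B\'ezout‑type estimate of Lemma~\ref{Lrb}.

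\textbf{Setup.} Write $R=k[x_0,\dots,x_n]/I_V$ for the homogeneous coordinate ring of $V$, a standard graded domain since $V$ is reduced and irreducible, and let $\fa\subset R$ be the ideal cut out by the defining equations of $T$, so $\fa$ is generated by forms $f_1,\dots,f_r$ with $\deg f_t=d_t$. The components $T_i$ of ${\rm Supp}(T)$ correspond to the minimal primes $\fp_i\subsetneq\fm$ of $\fa$; since $R$ is a finitely generated domain over a field, ${\rm codim}_V(T_i)=\Ht\fp_i$, the quantity $\deg(T_i)=e(R/\fp_i)$ is the usual degree of the irreducible variety $T_i$, and moreover $\Ht\fp_i\leq r$ by Krull. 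If $\fa=0$ or $\sqrt{\fa}=\fm$ there is nothing to prove, so assume $\fa\neq 0$. Finally I would reduce to $k$ infinite: replacing $k$ by $k(t)$ and $R$ by $R\otimes_k k(t)$ is a faithfully flat base change along a separable extension under which $R$ stays a domain and all degrees, codimensions and the invariant $\delta$ are unchanged, which puts us in the setting of Lemma~\ref{lcrucial}.

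\textbf{Cutting down.} Fix $p\leq r$. The crucial step is to find an ideal $\fb=(\alpha_1,\dots,\alpha_{p-1},\beta)\subseteq\fa$ with $\deg\alpha_j\leq d_j$ $(1\leq j\leq p-1)$ and $\deg\beta\leq d_r$ such that every $\fp_i$ with $\Ht\fp_i\leq p$ is a minimal prime of $\fb$. When $\Ht\fa\geq 2$ this is exactly what Lemma~\ref{lcrucialG}(2) gives: one obtains $\alpha_1,\dots,\alpha_r\in\fa$ with $\deg\alpha_j\leq d_j$ and $\Ht(\alpha_1,\dots,\alpha_{i-1},\alpha_r)_\fp=i$ for every minimal prime $\fp$ of $\fa$ and every $2\leq i\leq\Ht\fp$. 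Taking $\beta:=\alpha_r$ and, for a relevant $\fp_i$ with $h:=\Ht\fp_i\geq 2$, choosing $i=h\leq p$, one gets $\Ht(\alpha_1,\dots,\alpha_{p-1},\beta)_{\fp_i}=h=\dim R_{\fp_i}$; since any prime of the local ring $R_{\fp_i}$ containing an ideal of height $\dim R_{\fp_i}$ must be its maximal ideal, $\fp_i$ is minimal over $\fb$. If instead $\Ht\fp_i=1$ one simply notes that $\fp_i$ is minimal over the nonzero principal ideal $(\beta)\subseteq\fb$. The remaining case $\Ht\fa=1$ is handled by first factoring out a nonzero form $f_r$ of degree $d_r$: in the equidimensional ring $R/(f_r)$ the images of the relevant $\fp_i$ of height $\geq 2$ stay minimal over the image of $\fa$ with heights lowered by one, so Lemma~\ref{lcrucialG}(1) applies there, and lifting back with $\beta=f_r$ yields $\fb$ --- the same manoeuvre that appears inside the proof of Lemma~\ref{lcrucial}(2).

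\textbf{Conclusion.} Lift $\alpha_1,\dots,\alpha_{p-1},\beta$ to forms $G_1,\dots,G_{p-1},G\in k[x_0,\dots,x_n]$ and let $H_1,\dots,H_{p-1},H\subset\pp^n_k$ be the hypersurfaces they define, of degrees at most $d_1,\dots,d_{p-1}$ and $d_r$, so $\delta(H_j)\leq d_j$ and $\delta(H)\leq d_r$. The scheme $W:=V\cap H_1\cap\cdots\cap H_{p-1}\cap H$ has homogeneous coordinate ring $R/\fb$, so the irreducible components of ${\rm Supp}(W)$ are precisely the minimal primes of $\fb$; by the previous step each $T_i$ with ${\rm codim}_V(T_i)\leq p$ occurs among them as a reduced component with $\deg(T_i)=e(R/\fp_i)$. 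Hence
$$\sum_{\{\,i\ \mid\ {\rm codim}_V(T_i)\leq p\,\}}\deg(T_i)\ \leq\ \delta(W),$$
while Lemma~\ref{Lrb} gives
$$\delta(W)\ \leq\ \delta(V)\,\delta(H_1)\cdots\delta(H_{p-1})\,\delta(H)\ \leq\ \deg(V)\,d_1\cdots d_{p-1}\,d_r,$$
using $\delta(V)=\deg(V)$ since $V$ is reduced and irreducible. Combining the two displays proves the lemma. The one place that really requires care is the cutting‑down step: producing a last generator of the small degree $d_r$ (rather than $d_p$) while keeping all codimension‑$\leq p$ components of $T$ alive in an intersection of only $p$ hypersurfaces is exactly the ``trade the last generator'' refinement of Lemma~\ref{lcrucialG}(2), and controlling its height hypothesis together with the height‑one components is the delicate point; everything afterwards is a routine application of B\'ezout via Lemma~\ref{Lrb}.
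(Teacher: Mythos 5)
Your proof is correct and takes essentially the same route as the paper's: reduce to an infinite base field, pass to the homogeneous coordinate ring, invoke Lemma~\ref{lcrucialG} to produce $p$ forms of degrees at most $d_1,\ldots,d_{p-1},d_r$ in $\fa$ over which every minimal prime $\fp_i$ of codimension at most $p$ is minimal, and then apply the B\'ezout-type bound of Lemma~\ref{Lrb}. The only difference is that you make explicit the edge cases (the situation $\Ht(\fa)=1$, handled by first dividing out $f_r$, and the codimension-one components) which the paper's direct citation of Lemma~\ref{lcrucialG} leaves implicit.
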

\begin{proof} By the flat extension $k\subset k(u)$, where $u$ is a variable over $k$, one can assume that $k$ is infinite.

 Let $R$ be the homogeneous coordinate ring of  $V\subset \pp^n_k$ and let $\fa\subset R$ be the ideal of  definition of $T$ in $V$.  Let $\fp_i$ be the minimal prime ideal of $R/\fa$ defining the component $T_i$. Applying Lemma \ref{lcrucialG}, one can choose a subset $\{\A_1,\ldots,\A_{p-1},\A_r\}\subset \fa$ such that all $\fp_i$'s are minimal over $(\A_1,\ldots,\A_{\Ht(\fp_i)-1},\A_r)$. Since  $(\A_1,\ldots,\A_{\Ht(\fp_i)-1},\A_r)\subseteq (\A_1,\ldots,\A_{p-1},\A_r)\subseteq \fp_i$, $\fp_i$ must be minimal over $(\A_1,\ldots,\A_{p-1},\A_r)$. That is to say, every $T_i$ is an isolated component of $V\cap H_1\ldots\cap H_p$ where $H_i\subseteq  \pp^n_k $ is the reduced  hypersurface defined by $\A_i$. The result now follows from Lemma \ref{Lrb}.
\end{proof}

\begin{Remark}\label{Rfactorial}\rm
If furthermore $R$ is factorial and $p:=\codim T_i \geq 2$, then for any $i$,
$$\deg(T_i)\leq d_1\cdots d_{p-2}d_{r-1}d_r\deg(V).$$
Indeed, a minimal generator $f$ of the defining ideal of $T_i$, of minimal possible degree, satisfies $\deg(f)\leq d_r$ and generates a prime ideal. One can therefore replace $R$ by $R/(f)$ and apply the lemma.
\end{Remark} 

\section{Upper bounds}

\subsection{Basic notation}

Let $k$ denote a field of arbitrary characteristic and let $X\subseteq \mathbb{P}^n_k={\rm Proj}(k[x_0,\ldots,x_n])$ be a reduced and irreducible closed projective variety.
One is given a rational map 
$$\Phi:X\dasharrow \mathbb{P}^m_k={\rm Proj}(k[y_0,\ldots,y_m]),$$ 
with (closed) image $Y$. 

We will often  write $k[\xx]=k[x_0,\ldots,x_n]$ and $k[\mathbf{y}]=k[y_0,\ldots,y_m]$.
Given a closed projective (respectively, biprojective) variety $X\subset \pp_k^n$ (respectively, $\mathfrak{X}\subset \pp_k^n\times_k \pp_k^m$), we denote by $K(X)$ (respectively, $K(\mathfrak{X})$) its field of functions.

The {\em degree} of $\Phi$ $:X\dashrightarrow Y$ is the degree of the field extension $K(X)|K(Y)$.
 $\Phi$ is {\em generically finite} if $\left[K(X):K(Y)\right]<\infty$.

 The following result is well-known; we isolate it for the reader's convenience.
 
 \begin{Lemma}\label{Lshafar}
 	Let $\Phi:X\dashrightarrow \pp_k^m$ be a rational map with image $Y\subset \pp_k^m$. The following are equivalent:
 	\begin{enumerate}[\rm (i)] 
 		\item $\Phi$ is generically finite.
 		\item   $\dim(X)=\dim(Y)$.
 		\item  There exists a Zariski dense open subset $U \subset Y$ such that $\Phi^{-1}(U) \rightarrow U$ is a finite morphism.
 	\end{enumerate}
 \end{Lemma}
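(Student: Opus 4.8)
The plan is to prove the two formal equivalences (i) $\Leftrightarrow$ (ii) and (iii) $\Rightarrow$ (i) directly, and to derive the only substantial implication (i) $\Rightarrow$ (iii) from the standard generic-finiteness principle for dominant morphisms.

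First some set-up. Since $X$ is irreducible, the locus $X^{\circ}\subseteq X$ on which $\Phi$ is defined is a dense open subset, so $K(X^{\circ})=K(X)$; write $\phi\colon X^{\circ}\to Y$ for the induced morphism, which is dominant because $Y$ is by definition the closure of the image of $\Phi$. As usual, for an open $U\subseteq Y$ one sets $\Phi^{-1}(U):=\phi^{-1}(U)$, an open (hence, when nonempty, dense) subset of $X$; the comorphism of $\phi$ embeds $K(Y)$ into $K(X)$ and realizes the extension whose degree is $\deg(\Phi)$. Note that $K(X)$ is finitely generated as a field over $k$, hence also over $K(Y)$.

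For (i) $\Leftrightarrow$ (ii): since $X$ and $Y$ are integral and of finite type over $k$, one has $\dim X={\rm trdeg}_{k}K(X)$ and $\dim Y={\rm trdeg}_{k}K(Y)$. A finitely generated field extension is finite if and only if it is algebraic, i.e. of transcendence degree zero; by additivity of transcendence degree, ${\rm trdeg}_{K(Y)}K(X)=0$ is equivalent to ${\rm trdeg}_{k}K(X)={\rm trdeg}_{k}K(Y)$, i.e. to $\dim X=\dim Y$. For (iii) $\Rightarrow$ (i): if $\phi^{-1}(U)\to U$ is finite with $U\subseteq Y$ a dense open, then $\phi^{-1}(U)$ is a nonempty, hence dense, open of $X$, so $K(\phi^{-1}(U))=K(X)$ and $K(U)=K(Y)$; a finite dominant morphism of integral schemes induces a finite extension of function fields, whence $[K(X):K(Y)]<\infty$.

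It remains to prove (i) $\Rightarrow$ (iii), which is the crux. Assuming $[K(X):K(Y)]<\infty$, one invokes the classical fact that a dominant morphism $\phi\colon X^{\circ}\to Y$ of integral schemes of finite type over a field, with $K(X^{\circ})/K(Y)$ finite, becomes finite after restricting the target to a suitable dense open $U\subseteq Y$ (see, e.g., \cite[Exercise II.3.7]{Ha}). Concretely: generic flatness produces a dense open of $Y$ over which $\phi$ is flat, and by the equality of dimensions the fibers there are finite, so $\phi$ is quasi-finite over that open; Zariski's Main Theorem then factors $\phi$ over it as an open immersion followed by a finite morphism, and shrinking the base once more turns the open immersion into an isomorphism over $U$. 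Since $\Phi^{-1}(U)=\phi^{-1}(U)$ by our convention, this is precisely (iii). The main obstacle is exactly this last step: upgrading ``generically finite'' to ``finite over a dense open'' genuinely requires an input like Zariski's Main Theorem (or, equivalently, properness of the closure of the graph of $\Phi$ together with quasi-finiteness), whereas all the rest is transcendence-degree bookkeeping.
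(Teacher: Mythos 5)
Your proof is correct. Note, however, that the paper does not actually prove Lemma~\ref{Lshafar}: it is stated without proof as a well-known fact (the label hints at Shafarevich; the key implication is also Hartshorne, Exercise~II.3.7, which you cite), so there is no internal argument to compare against. Your treatment is exactly the standard one: (i)$\Leftrightarrow$(ii) is transcendence-degree bookkeeping, (iii)$\Rightarrow$(i) follows since a finite dominant morphism of integral schemes induces a finite extension of function fields, and (i)$\Rightarrow$(iii) is the genuine content, handled via generic flatness, quasi-finiteness, and Zariski's Main Theorem. The only compressed step is the final shrinking of the base: after the ZMT factorization $\phi^{-1}(W)\hookrightarrow Z\xrightarrow{g} W$ with $g$ finite, one should replace $Z$ by the closure of the image of the open immersion and remove from $W$ the image of the (lower-dimensional, hence proper closed, since $g$ is finite and closed) complement $Z\setminus \phi^{-1}(W)$; over the resulting dense open $U$ one has $\phi^{-1}(U)=g^{-1}(U)$, finite over $U$. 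That justification is standard and its omission is not a gap at the level of detail appropriate for this lemma.
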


\smallskip

So much for the geometric side.

The map $\Phi$ is represented by a set of homogeneous elements $\{f_0,\ldots,f_m\}\subset R$ of a certain degree $d$ -- called {\em coordinates} of $\Phi$ -- where $R$ stands for the homogeneous coordinate ring of the given embedding $X\subseteq \mathbb{P}^n_k$.
Such a representation is unique up to proportionality of the coordinates within $R$.
The ideal $I:=(f_0,\ldots,f_m)\subset R$ is often called a {\em base ideal} of $\Phi$.

The degree $\deg(X)$ of the projective variety $X$ in its embedding is the graded Hilbert multiplicity $e(R)$ of the standard graded $k$-algebra $R$.

An essential role will be played in this work by the Rees algebra $\mathcal{R}_R(I)=R[Iu]$ of $I$ and by its special fiber $\mathcal{R}_R(I)/\fm\mathcal{R}_R(I)$, where $\fm=(R_1)$.
Recall from the preliminaries (Section~\ref{prelims}) that $\mathcal{R}_R(I)/\fm\mathcal{R}_R(I)\simeq k[I_d]$ as graded $k$-algebras.
Normalizing degrees, the corresponding projective subvariety of $\pp_k^m$ coincides with the closed image $Y$ of $\Phi$.

\begin{Remark}\rm
It is important to note heretofore that we will fix a base ideal of $\Phi$, but any two base ideals will have graded isomorphic respective Rees algebras and special algebras. 
This will give certain slight instability in the main result, but this is a small price to pay as far as applications are concerned.
\end{Remark} 

As a final piece of notation, recall from Section~\ref{prelims} that $\mathcal{R}_R(I)$ is a bigraded $k$-algebra with the induced bidegree of $k[\xx]\otimes_k k[\yy]=k[\xx,\yy]$. Thus, one can write  $\mathcal{R}_R(I)\simeq S/J$, where $S:=R\otimes_kk[\yy]=R[\yy]$, for some bihomogeneous presentation ideal $J$.
Again, $J$ is not uniquely defined by $I$, much less by $\Phi$, but its main numerical invariants are uniquely determined by $I$.

\subsection{Main theorem}

 \begin{Theorem}\label{TDegree} Let   $\Phi: X\dasharrow Y\subset \pp_k^m$ be a generically finite rational map with source a non-degenerate reduced and irreducible variety $X\subseteq \mathbb{P}^n_k$. Fix a base ideal $I\subset R$ of $\Phi$ and let $J\subset S$, as above, be a bihomogeneous presentation ideal of the Rees algebra $\mathcal{R}_R(I)$ of $I$.
 Let $\mathcal{I}\subset J$ be a bihomogeneous subideal, with a set of generators consisting of bihomogeneous elements of $\xx$-degrees $d_1\geq \cdots\geq d_r\geq 1$.
 If $J$ is a minimal prime of $\mathcal{I}$, then 
  $$\deg(\Phi)\leq d_1\cdots d_{t-1}\cdot d_r\cdot e(R),$$ 
    where $t=\dim X$.
\end{Theorem}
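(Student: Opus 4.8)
The strategy is to realize $\deg(\Phi)$ as a Hilbert multiplicity of a one-dimensional standard graded domain and then bound that multiplicity by a degree computation on a projective scheme, using Lemma~\ref{Ldelta}. First I would recall that the closed image $Y$ has homogeneous coordinate ring $k[I_d]\simeq \mathfrak{F}_R(I)=[\mathcal{R}_R(I)]_0$, so its field of functions $K(Y)$ is the subfield of degree-zero homogeneous fractions of $\mathfrak{F}_R(I)$. On the source side, since $\Phi$ is generically finite, $\dim X=\dim Y$; passing to a well-chosen affine chart (dually, localizing $\mathcal{R}_R(I)$ at a suitable homogeneous prime of the $R$-grading lying over the generic point of $Y$) produces a one-dimensional standard graded domain $A$ over the base field $F:=K(Y)$ whose field of degree-zero fractions $K(A)$ is exactly $K(X)$. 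Then Lemma~\ref{LHilDeg} gives $\deg(\Phi)=[K(X):K(Y)]=[K(A):F]=e(A)$.

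Next I would interpret $e(A)$ geometrically. The ring $A$ is, up to the base change to $F$, the homogeneous coordinate ring of a zero-dimensional scheme — the generic fiber of $\Phi$ viewed inside the blowup — and $e(A)$ is its degree, i.e. the sum of the degrees of its (necessarily $0$-dimensional, hence of codimension $t=\dim X$ inside the Rees side) components counted without embedded-prime multiplicity, which is $\delta$ of that fiber. The point is that $\mathcal{R}_R(I)=S/J$ with $J$ a minimal prime of $\mathcal{I}$, and $\mathcal{I}$ is generated by bihomogeneous forms whose $\xx$-degrees are $d_1\ge\cdots\ge d_r\ge 1$. Cutting out the generic fiber corresponds, after the localization/base change, to intersecting the ambient variety (whose degree is $e(R)=\deg X$, since in the relevant grading the $\yy$-variables become units over $F$ and only the $\xx$-degrees survive) with the hypersurfaces defined by the images of the generators of $\mathcal{I}$. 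Because $J$ is minimal over $\mathcal{I}$, the component in question is isolated in the intersection of $t$ of these hypersurfaces, exactly the situation of Lemma~\ref{Ldelta} (with $p=t$): one uses $t-1$ of the forms of degrees $\le d_1,\ldots,d_{t-1}$ together with one of degree $\le d_r$. Applying Lemma~\ref{Ldelta} (via Lemma~\ref{Lrb} and B\'ezout) then yields
$$\deg(\Phi)=e(A)\le d_1\cdots d_{t-1}\, d_r\, e(R),$$
which is the asserted bound.

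The main obstacle I anticipate is making the reduction in the first paragraph fully rigorous: one must produce the one-dimensional domain $A$ together with the identification $K(A)=K(X)$ and $F=K(Y)$ in a way that survives the passage to degrees. Concretely, this means choosing a homogeneous prime $\wp$ of $\mathcal{R}_R(I)$ in the $R$-grading that contracts to the zero ideal in $\mathfrak{F}_R(I)$ (so that $\mathfrak{F}_R(I)$ localizes to $F$) and such that $\mathcal{R}_R(I)_\wp/(\text{degree-zero part})$ has dimension one with fraction field realizing $K(X)$ — here the non-degeneracy of $X$ and genericity of the map are used to guarantee that the relevant linear form $H$ from Lemma~\ref{LHilDeg} is transcendental and that the fiber is genuinely zero-dimensional over $F$. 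A secondary technical point is bookkeeping the base change to $K(Y)$ (or first to $k(u)$ to make the residue field infinite, as in Lemma~\ref{Ldelta}) so that Lemma~\ref{lcrucialG}'s avoidance argument applies and so that $e(R)$ is unchanged; these are routine but must be stated carefully. Once the reduction is in place, the degree estimate is an immediate invocation of Lemma~\ref{Ldelta}.
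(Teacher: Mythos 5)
Your proposal follows essentially the same route as the paper's proof: reduce $\deg(\Phi)$ via Lemma~\ref{LHilDeg} to the Hilbert multiplicity of the one-dimensional standard graded domain $(S/J)\otimes_B Q(Y)$, then bound that multiplicity by applying Lemma~\ref{Ldelta} (hence Lemma~\ref{Lrb} and B\'ezout) to $\widetilde{\Gamma}$ as an isolated component of the subscheme of $\widetilde{\mathcal{X}}\subset \mathbb{P}^n_{Q(Y)}$ cut out by the generators of $\mathcal{I}$, using that $J$ is a minimal prime of $\mathcal{I}$ and that the relevant codimension is at most $t$. The reduction you flag as the main obstacle is carried out in the paper just as you anticipate, via the graph $\Gamma$, the birational projection to $X$, and a common transcendental linear form $H$ yielding $[K(\Gamma):K(Y)]=[K((S/J)\otimes_B Q(Y)):Q(Y)]$.
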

\begin{proof} First, note that the statement at least makes sense because $t\leq r$; indeed, since a finitely generated domain over a field is catenary, one has
\begin{eqnarray*}
&t = \dim X = \dim Y \leq m = \Ht (J) \\
&\leq r, \quad \mbox{by Krull's prime ideal theorem.}
\end{eqnarray*}
	
The idea of the proof consists in reducing the question to a one dimensional standard graded domain over a field and using Lemma~\ref{LHilDeg}.
We will first collect some overall facts about the present dealings, then introduce the role of the given subideal $\mathcal{I}$.

From the above prolegomena and the general preliminaries in Section~\ref{prelims}, one can write $\mathcal{R}_R(I)=[\mathcal{R}_R(I)]_0\oplus \fm \mathcal{R}_R(I)$, a direct decomposition of graded modules over $[\mathcal{R}_R(I)]_0\simeq k[I_d]$.
By a previous identification, the latter is the special fiber $\mathcal{F}(I)$ of $I$ and also the homogeneous coordinate ring of the closed image $Y$ of $\Phi$. In particular, the field of fractions of $[\mathcal{R}_R(I)]_0$ is identified with the field $Q(Y)$ of fractions of this ring.

For a lighter notation, set $B:=k[\yy]\subset S:=R[\yy]$ and take respective presentations  $[\mathcal{R}_R(I)]_0\simeq B/\fq$ and $\mathcal{R}_R(I)\simeq S/J$. In this notation, one has
\begin{eqnarray*}\label{redenoting}
	\mathcal{R}_R(I)\otimes_{[\mathcal{R}_R(I)]_0} Q(Y) &\simeq &
	\mathcal{R}_R(I)\otimes_{B/\fq}  B_q/qB_q\simeq \left(\mathcal{R}_R(I)\otimes_B B/\fq\right)\otimes_{B/\fq}  B_q/qB_q\\
	&\simeq & \mathcal{R}_R(I)\otimes_B \left(B/\fq\otimes_{B/\fq}  B_q/qB_q\right)\simeq \mathcal{R}_R(I)\otimes_B B_q/qB_q\\
	&\simeq &  \mathcal{R}_R(I)\otimes_B Q(Y)\simeq (S/J)\otimes_B Q(Y).
\end{eqnarray*}

 Moreover, $\mathcal{R}_R(I)\otimes_{[\mathcal{R}_R(I)]_0} Q(Y)$ is a one-dimensional domain; see for instance \cite[Proof of Theorem 5.3]{KPU}.

Let $\Gamma$ be (the closure of) the graph of $\Phi$, namely, $\Gamma={\rm BiProj}(\mathcal{R}_R(I))$.
Let $Q(\Gamma)$ by abuse denote the field of fractions of $\mathcal{R}_R(I)$, while $K(\Gamma)$ will denote the function field of $\Gamma$ as a biprojective variety, i.e., the subfield of $Q(\Gamma)$ consisting of the fractions of bihomogeneous elements of the same bidegree.

Note the usual diagram of maps:

\begin{equation}\label{diagram}
\xymatrix{
	& \Gamma \ar^{\pi}[rd] \ar [ld]_{\beta}                        &                   \\
	X \ar@^{-->} [rr]^{\Phi} &   & Y   \\    
}
\end{equation}

The map $\beta$ is a rational in the context of abstract non-embedded varieties, but one can retrieve its degree as $[K(\Gamma):K(X)]$. Likewise, the degree of $\pi$ is $[K(\Gamma):K(Y)]$ -- note that both degrees are well-defined integers since $\dim \Gamma=\dim \mathcal{R}_R(I)-2=\dim R-1=\dim X=\dim Y$. Since $\beta$ is birational, its degree is $1$, hence $\deg (\Phi)=\deg(\pi)$.

Thus, we can move over to the rational map $\pi$.

\smallskip

{\sc Claim 1.} $\deg(\pi)=[K((S/J)\otimes_B Q(Y)):Q(Y)]$ where $K((S/J)\otimes_B Q(Y))$ denotes the subfield of $Q(\Gamma)$ of fractions of homogeneous elements with same $\xx$-degree.

To see this, recall that $Q(Y)|K(Y)$ and $K(\mathcal{R}_R(I)\otimes_B Q(Y))|K(\Gamma)$ are both purely transcendental extensions of degree one, hence  $Q(Y)=K(Y)(H)$ and $K(\mathcal{R}_R(I)\otimes_B Q(Y))=K(\Gamma)(H)$ for a sufficiently general $k$-linear form $0\not= H\in B/\fq\subset Q(Y)$. 
Clearly, then 
$$\deg(\pi)=[K(\Gamma):K(Y)]=[K((S/J)\otimes_B Q(Y)):Q(Y)].$$

The above two claims imply, via Lemma~\ref{LHilDeg}, that $\deg(\pi)=e((S/J)\otimes_BQ(Y))$. 

Therefore, we have attained our preliminary goal of reducing the problem to estimating the multiplicity of a one-dimensional standard graded domain over a field.

	
	It is now time to bring over the ideal $\mathcal{I}$. 





One has natural inclusions of biprojective schemes over $\spec (k)$
$$\Gamma={\rm BiProj}_k(S/J)\subset \mathcal{Z}:={\rm BiProj}_k(S/\mathcal{I})\subset \mathcal{X}:={\rm BiProj}_k(S)\subset \mathbb{P}^n_k\times_k{\mathbb{P}^m_k},$$
inducing similar inclusions at the level of the associated projective schemes over $\spec(Q(Y))$:
\begin{eqnarray*}
\widetilde{\Gamma}={\rm Proj}_{Q(Y)}((S/J)\otimes_BQ(Y))&\subset & \widetilde{\mathcal{Z}}={\rm Proj}_{Q(Y)}((S/\mathcal{I})\otimes_BQ(Y))\\
&\subset & \widetilde{\mathcal{X}}={\rm Proj}_{Q(Y)}(S\otimes_BQ(Y)))\subset \mathbb{P}^n_{Q(Y)}.
\end{eqnarray*}

On the other hand, $S\otimes_BQ(Y)=(R\otimes_k k[\yy])\otimes_{k[\yy]}Q(Y)=R\otimes _k Q(Y)$, hence $\widetilde{\mathcal{X}}={\rm Proj}_{Q(Y)}(R\otimes_k Q(Y)))$.
Since the Hilbert function is stable under base field extension, one has $\deg(X)=\deg(\widetilde{\mathcal{X}})$.

Finally, note that $\widetilde{\Gamma}$ is a maximal irreducible component of $\widetilde {\mathcal{Z}}$. Indeed, since  $J$ is a minimal prime  of $\mathcal{I}$,  the ideal $J\otimes_B B_{\fq}\subset S\otimes_BB_{\fq}$  is a minimal  prime of  $(\mathcal{I},{\fq})\otimes_B B_{\fq}$. Then  
 the image of $J\otimes_B B_{\fq}$ is still a minimal prime of the image of $(\mathcal{I},{\fq})\otimes_B B_{\fq}$ over the residue ring $S\otimes_BQ(Y)=(S\otimes_BB_{\fq})/(\fq S\otimes_BB_{\fq}).$
 
 But, even more:

\smallskip
{\sc Claim 2.} $\Ht_{S\otimes_B Q(Y)}((J\otimes_B B_{{\fq}})/(\fq S\otimes_BB_{\fq}))\leq t$.

Recall that $t=\dim X=\dim R-1$.
Since $R$ and $B=k[\yy]$ are in disjoint variables, one has $\dim R\otimes_k Q(Y)=\dim R$. 
On the other hand 
$S\otimes_BQ(Y)=R\otimes _k Q(Y).$
Since $\dim (S/J)\otimes_B Q(Y)=1$, as seen above, then
$$\Ht_{S\otimes_B Q(Y)}(J\otimes_B B_{{\fq}}/\fq S\otimes_BB_{\fq})\leq \dim S\otimes_BQ(Y)-\dim (S/J)\otimes_B Q(Y)=\dim R-1,$$ 
as claimed.

Now apply Lemma~\ref{Ldelta} with $V=\widetilde{\mathcal{X}}$, $T=\widetilde{\mathcal{Z}}$ and $p=\Ht_{S\otimes_B Q(Y)}(J\otimes_B B_{{\fq}}/\fq S\otimes_BB_{\fq})$. 
Since $\widetilde{\mathcal{Z}}$ is defined in (standard) degrees $d_1\geq\cdots\geq d_r\geq 1$,  it follows that 
$$\sum_{\{i| {\rm codim}_{\widetilde{\mathcal{X}}}(\widetilde{\mathcal{Z}}_i)\leq p\}}\deg(\widetilde{\mathcal{Z}}_i)\leq d_1\cdots d_{p-1}d_r\deg(\widetilde{\mathcal{X}}),$$
where $\widetilde{\mathcal{Z}}_i$ is a maximal irreducible component of $\widetilde{\mathcal{Z}}$.
Now, as seen above, $\widetilde{\Gamma}$ is among these components and,  moreover, its codimension is at most $p$.
Therefore,
$$\deg(\Phi)=\deg(\pi)=\deg(\widetilde{\Gamma}))\leq d_1\cdots d_{p-1}\cdot d_r\cdot \deg(\widetilde{\mathcal{X}})\leq d_1\cdots d_{t-1}\cdot d_r\cdot \deg(X),$$
since $p\leq t$ and $\deg(X)=\deg(\widetilde{\mathcal{X}})$.
This completes the proof.
\end{proof}

\begin{Remark}\label{Rr-1}\rm
	(a) For the purpose of the stated bound in Theorem~\ref{TDegree}, the ideal $\mathcal{I}$ provides a  smaller bound  when it is generated by a subset of a minimal set of  bihomogeneous generators of $J$.
	
(b) If further $R$ is factorial and $t\geq 2$, it follows from Remark \ref{Rfactorial} that 
$$\deg(\Phi)\leq d_1\cdots d_{t-2}\cdot d_{r-1}d_r\cdot e(R).$$ 

(c) With the same notation as above, Kustin-Polini-Ulrich in \cite[Theorem 5.3]{KPU}  show  that $\deg(\Phi)=e(\mathcal{R}_R(I)_{(\xx\mathcal{R}_R(I))}).$  
Thus, Theorem~\ref{TDegree} provides an upper bound for the multiplicity of the local ring $\mathcal{R}_R(I)_{(\xx\mathcal{R}_R(I))}$.
\end{Remark}


\subsection{Consequences}\label{consequences}

A notable case of the previous theorem is when $\mathcal{I}$ is closely related to the presentation ideal of the symmetric algebra of $I$, in which case the bound is expressed in terms of syzygies.

\begin{Corollary}\label{Cmaxrank} {\rm (Syzygy rank criterion)} Let     $\Phi: X\dasharrow Y\subset \pp_k^m$ be a generically finite rational map with source a non-degenerate reduced and irreducible variety $X={\rm Proj}(R)\subseteq \mathbb{P}^n_k$. Fix a base ideal $I\subset R$ of $\Phi$ and its minimal graded presentation
	$$F_1=\bigoplus^{r}R(-d-d_i)\xrightarrow{\psi} F_0=\bigoplus^{m+1}R(-d)\to I\to 0.$$ 
	Let $G$ be a finitely generated graded $R$-submodule of ${\rm Im}(\psi)\subset F_0(+d)$ of rank $m$, generated in degrees $d_{i_1}\geq\cdots\geq d_{i_s}$. 	
	If $\grade_R(I)\geq 2$, then  $\deg(\Phi)\leq d_{i_1}\cdots d_{i_{t-1}}\cdot d_{i_s}\cdot e(R)$
	where $t=\dim X$. 
\end{Corollary}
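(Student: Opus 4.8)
The plan is to deduce Corollary~\ref{Cmaxrank} directly from Theorem~\ref{TDegree} by translating the submodule $G\subset\mathrm{Im}(\psi)$ into a bihomogeneous subideal $\mathcal I\subset J$ whose $\xx$-degrees match the generating degrees of $G$. First I would recall the explicit description from Section~\ref{prelims}: $L=I_1((y_0\cdots y_m)\cdot\psi)$, i.e. each column syzygy $(g_0,\ldots,g_m)$ with $\sum g_if_i=0$ contributes the biform $g_0y_0+\cdots+g_my_m$ of $\xx$-degree equal to the degree of that syzygy and $\yy$-degree $1$. Applying this to a finite set of generators of $G$ in degrees $d_{i_1}\ge\cdots\ge d_{i_s}$ produces bihomogeneous elements of $L\subset J$ of $\xx$-degrees $d_{i_1},\ldots,d_{i_s}$; let $\mathcal I$ be the ideal they generate. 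By construction $\mathcal I\subset L\subset J$, so $\mathcal I\subset J$, and all the degree hypotheses of Theorem~\ref{TDegree} are met with $r=s$.

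The key remaining point is to verify that $J$ is a minimal prime of $\mathcal I$. Here I would use the argument already sketched in the paragraph preceding Remark~\ref{Rminprime} together with Remark~\ref{Rminprime} itself: since $\grade_R(I)\ge 2$, in particular $I$ contains a regular element, one has $J=L:(a)^\infty$ for a regular $a\in I$, and because $\sym_{R_\wp}(I_\wp)=\mathcal R_{R_\wp}(I_\wp)$ for all $\wp\notin V(I)$, any prime containing $L$ contains $I$ or $J$. The same reasoning applies with $L$ replaced by the subideal $\mathcal I$ coming from any $G$ of maximal rank $m$: the point is that $\mathcal I_\wp=L_\wp=$ (the presentation ideal of $\sym$, localized) generates, up to radical, the same thing as $J_\wp$ away from $V(I)$, because a rank-$m$ submodule of the syzygies localizes to a full-rank submodule and hence defines the symmetric algebra relations up to the $I$-torsion; thus any prime over $\mathcal I$ contains $I$ or $J$, forcing $J$ to be a minimal prime of $\mathcal I$. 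This is exactly the content flagged in Remark~\ref{Rminprime} ("any $N$ having maximal rank $m$"), so I would invoke it rather than reprove it.

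With $J$ a minimal prime of $\mathcal I$ and $\mathcal I$ generated in $\xx$-degrees $d_{i_1}\ge\cdots\ge d_{i_s}\ge 1$, Theorem~\ref{TDegree} applies verbatim and yields
\[
\deg(\Phi)\le d_{i_1}\cdots d_{i_{t-1}}\cdot d_{i_s}\cdot e(R),
\]
with $t=\dim X$, which is the assertion. I should also note the harmless bookkeeping point that the generating degrees of $G$ as a submodule of $\mathrm{Im}(\psi)\subset F_0(+d)$ are precisely the $\xx$-degrees of the corresponding biforms in $S$ (the shift by $d$ in $F_0(+d)$ versus $F_0=\bigoplus R(-d)$ is exactly compensated by $\mathrm{bideg}(u)=(-d,1)$, so the $y_i$-variable carries no $\xx$-degree and the syzygy degree passes through unchanged); this is routine.

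The main obstacle is the minimal-prime verification for $\mathcal I$ when $G$ is an arbitrary rank-$m$ submodule rather than the full syzygy module: one must be careful that passing to a proper submodule of the syzygies does not introduce spurious minimal primes of small height that are not contained in $V(I)$. The resolution is that a rank-$m$ (hence generically full-rank) submodule $G$ agrees with $\mathrm{Im}(\psi)$ after inverting a single nonzerodivisor of $R$ whose zero locus lies in $V(I)$ (this uses $\grade_R(I)\ge 2$, or at least $\grade\ge 1$, to know such torsion is $I$-torsion and $I$ has a regular element), so localizing away from $V(I)$ makes $\mathcal I$ and $L$ — and hence $\mathcal I$ and $J$ up to radical — coincide; combined with the standard fact that $\mathcal R_R(I)=\sym_R(I)$ off $V(I)$, every minimal prime of $\mathcal I$ other than $J$ must contain $I$, and since $J$ is prime of height $m$ and contains $\mathcal I$, it is a minimal prime of $\mathcal I$. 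I would present this as a short lemma-style paragraph invoking Remark~\ref{Rminprime}, then close with the one-line application of Theorem~\ref{TDegree}.
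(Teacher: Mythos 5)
Your skeleton is the same as the paper's (turn the generators of $G$ into biforms of $\yy$-degree one, let $\mathcal I\subseteq L\subseteq J$ be the ideal they generate, verify that $J$ is a minimal prime of $\mathcal I$, and quote Theorem~\ref{TDegree}; the degree bookkeeping is indeed routine), but the justification you give for the minimal-prime step contains a genuine error. You assert that, since $G$ has rank $m$, the quotient ${\rm Im}(\psi)/G$ is $I$-torsion, i.e.\ that $G$ and ${\rm Im}(\psi)$ agree after inverting an element whose zero locus lies in $V(I)$, and from this that every minimal prime of $\mathcal I$ other than $J$ contains $I$. Neither claim is true, and $\grade_R(I)\geq 2$ gives no such control: take $G=g\cdot{\rm Im}(\psi)$ for a nonzero form $g\notin\sqrt I$. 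This is a perfectly admissible rank-$m$ graded submodule, yet ${\rm Im}(\psi)/G$ is supported on $V(g)\not\subseteq V(I)$ and $\mathcal I=gL$ acquires height-one components coming from $g$ that contain neither $IS$ nor $J$. So "localizing away from $V(I)$" does not make $\mathcal I$ and $L$ coincide, which is the hinge of your argument; and Remark~\ref{Rminprime} merely records the desired conclusion ("one might assume as well"), it is not a proof you can lean on.

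The conclusion itself is true, and the repair is close to what you wrote but uses a different localization: there exists some nonzero $h\in R$ with $h\cdot{\rm Im}(\psi)\subseteq G$ (the quotient is a finitely generated torsion module over the domain $R$), with no control whatsoever on $V(h)$, and then $hL\subseteq\mathcal I\subseteq L$. Since $J\cap R=0$, one has $h\notin J$, so any prime $P$ with $\mathcal I\subseteq P\subseteq J$ avoids $h$, hence contains $L$, and the minimality of $J$ over $L$ from Section~\ref{prelims} forces $P=J$. (Note this only needs a regular element in $I$.) The paper argues differently, and this is where $\grade_R(I)\geq 2$ genuinely enters its proof: it extracts an $(m+1)\times m$ submatrix $M$ of rank $m$ from the matrix of generators of $G$, invokes \cite[Lemma 3.9]{HS} to get $hI=(\Delta_0,\ldots,\Delta_m)$ for the maximal minors of $M$, and inverts the powers of that $h$, so that $L_M$ becomes the presentation ideal of the symmetric algebra of the localized ideal and $\mathfrak{S}^{-1}J$ is a minimal prime of it; since $h\notin J$, minimality descends. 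Either of these arguments closes the gap; the one you actually gave does not.
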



\begin{proof}
	Let $N$ be the $(m+1)\times  s$ matrix whose columns are the generators of $G$.
	First note that $r\geq s\geq m\geq \dim Y=\dim X=t$.
	Since $N$ has rank $m$, we can choose an  $(m+1)\times m$ submatrix $M$ of $N$ of rank $m$. 
	With a previous notation, let $J\subset S=R[\yy]$ denote a presentation ideal of the Rees algebra of $I$. Then one has inclusions of ideals 
	$$L_M:=I_1(\yy\cdot M)\subset L_N:=I_1(\yy\cdot N)\subset L:=I_1(\yy\cdot \psi)\subset J.$$

	{\sc Claim.} $J$ is a minimal prime over $L_N$.
	
It suffices to prove that $J$ is a minimal prime over $L_M$.	To see this, let $\Delta_0,\ldots,\Delta_m$ be the maximal minors of $M$. Since $\grade_R(I)\geq 2$,  \cite[Lemma 3.9]{HS} shows that  $hI=(\Delta_0,\ldots,\Delta_m)$, for some non-zero element $h\in R$. Let $\mathfrak{S}\subset R$ denote the multiplicatively closed set of powers of $h$.
Taking fractions, $\mathfrak{S}^{-1}I=\mathfrak{S}^{-1}(\Delta_0,\ldots,\Delta_m)$ and $\grade_{\mathfrak{S}^{-1}R}\mathfrak{S}^{-1}(\Delta_0,\ldots,\Delta_m)\geq 2$ as well, yielding the free presentation (resolution)
	$$0\rightarrow\bigoplus^m \mathfrak{S}^{-1}R\xrightarrow{M} \bigoplus^{m+1}\mathfrak{S}^{-1}R\to \mathfrak{S}^{-1}I\to 0.$$
	 Thus, $\mathfrak{S}^{-1}(L_M)$ is the defining ideal of the symmetric algebra of $\mathfrak{S}^{-1}I$. But, $\mathcal{R}_{R}(hI)=\mathcal{R}_{R}(I)$ as $R$ is a domain. Therefore, $\mathfrak{S}^{-1}J$ is a presentation ideal of the Rees algebra of $\mathfrak{S}^{-1}I$ on $\mathfrak{S}^{-1}R$, and hence $\mathfrak{S}^{-1}J$ is a minimal prime over $\mathfrak{S}^{-1}q(L_M)$. Since  $h\not \in J$, $J$ is a minimal prime over $L_M$  thence over $L_N$, too. 
	
	Now the result follows from   Theorem~\ref{TDegree}.
\end{proof}

Curiously, Corollary \ref{Cmaxrank} provides a numerical obstruction for integrality in the context of a Noether normalization.

\begin{Corollary} Let $R$ be a standard graded Noetherian ring  of dimension $t+1$ over a field $k$  and let $\{f_0,\ldots f_t\}\subset R$ be a set of algebraically independent elements of the same degree $d$ generating an ideal of grade at least $2$. Let $N$ be a submatrix of maximal rank of the syzygy matrix of $(f_0,\ldots f_t)$. 
	If $k[f_0,\ldots f_t]\subset R$ is an integral extension then $\sqrt[t]{d_1\cdots d_t}\geq d$ where  $d_1,\ldots, d_t$ are the column degrees of $N$.
\end{Corollary}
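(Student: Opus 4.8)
The plan is to pin down $\deg(\Phi)$ exactly from the integrality hypothesis and then compare with the bound of Corollary~\ref{Cmaxrank}. Let $\Phi\colon X=\Proj(R)\dashrightarrow\mathbb{P}^t_k$ be the rational map with coordinates $f_0,\dots,f_t$. Since these are algebraically independent over $k$, the homogeneous subalgebra $A:=k[f_0,\dots,f_t]\subseteq R$ (graded by $\deg f_i=d$) is a polynomial ring in $t+1$ variables, and after normalizing degrees it is the homogeneous coordinate ring of the closed image $Y$ of $\Phi$; hence $Y=\mathbb{P}^t_k$ and $\dim Y=t=\dim X$, so $\Phi$ is generically finite by Lemma~\ref{Lshafar}. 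One may assume that $R$ is a domain and that $X\subseteq\mathbb{P}^n_k$ is non-degenerate (replacing $\mathbb{P}^n_k$ by the linear span of $X$), so that Corollary~\ref{Cmaxrank} applies. The integrality hypothesis says precisely that $R$ is a module-finite $A$-algebra; set $n:=\rank_A R=[Q(R):Q(A)]$, where $Q(-)$ denotes fields of fractions.

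The heart of the argument is the exact identity $\deg(\Phi)=d^{\,t}\,e(R)$, which I would obtain in two steps. First, $\deg(\Phi)=n/d$: fixing $0\ne\ell\in R_1$ one has $Q(R)=K(X)(\ell)$ with $\ell$ transcendental over $K(X)$, while $Q(A)=K(Y)(f_0)$; writing $f_0=(f_0/\ell^d)\,\ell^d$ with $f_0/\ell^d\in K(X)$, so that $K(X)(f_0)=K(X)(\ell^d)$, the tower $K(Y)(f_0)\subseteq K(X)(\ell^d)\subseteq Q(R)$ multiplies out $n$ as $[K(X)(\ell^d):K(Y)(f_0)]\cdot[Q(R):K(X)(\ell^d)]$. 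The first factor is $[K(X):K(Y)]=\deg(\Phi)$, by linear disjointness of the finite extension $K(X)/K(Y)$ from the purely transcendental $K(Y)(f_0)/K(Y)$ (note $f_0$ is transcendental over $K(X)$); the second is $d$, since $T^d-\ell^d$ is Eisenstein at the prime $\ell^d$ of $K(X)[\ell^d]$, hence irreducible over $K(X)(\ell^d)$, in every characteristic. Second, $n=d^{\,t+1}e(R)$: decomposing $R=\bigoplus_{c=0}^{d-1}R^{(c)}$ with $R^{(c)}:=\bigoplus_{i\ge 0}R_{di+c}$, each $R^{(c)}$ is a torsion-free graded module over $A$, the latter regraded by $\deg/d$ as a standard graded polynomial ring, whence $e(R^{(c)})=\rank_A(R^{(c)})\cdot e(A)=\rank_A(R^{(c)})$; since $\dim_k(R^{(c)})_i=\dim_k R_{di+c}$ is, for $i\gg 0$, a polynomial of degree $t$ in $i$ with leading term $\frac{e(R)d^t}{t!}\,i^t$, each $R^{(c)}$ has rank $e(R)d^t$, and summing over the $d$ residues $c$ gives $n=d\cdot e(R)d^t$. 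Putting the two steps together, $\deg(\Phi)=n/d=d^{\,t}e(R)=d^{\,t}\deg(X)$.

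To finish, I would apply Corollary~\ref{Cmaxrank} to the ideal $I=(f_0,\dots,f_t)$, which has grade $\ge 2$, with $G$ the $R$-submodule of $\operatorname{Im}(\psi)$ spanned by the columns of $N$: this $G$ has rank $m=t$ and is generated in the column degrees $d_1\ge\cdots\ge d_t$, so the corollary gives $\deg(\Phi)\le d_1\cdots d_{t-1}\cdot d_t\cdot e(R)=d_1\cdots d_t\cdot e(R)$. Comparing with $\deg(\Phi)=d^{\,t}e(R)$ and cancelling $e(R)>0$ leaves $d^{\,t}\le d_1\cdots d_t$, that is, $\sqrt[t]{d_1\cdots d_t}\ge d$, as asserted.

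The main obstacle is precisely the identity $\deg(\Phi)=d^{\,t}e(R)$; its two non-formal ingredients are the characteristic-free irreducibility of $T^d-\ell^d$ used in the field-degree computation and the Veronese-style Hilbert-function bookkeeping that turns ``module-finite of rank $n$'' into $n=d^{\,t+1}e(R)$. Once that identity is in hand, the corollary follows by a one-line comparison with Corollary~\ref{Cmaxrank}.
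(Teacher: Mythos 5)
Your proof is correct, and its skeleton is the same as the paper's: deduce generic finiteness, establish the exact value $\deg(\Phi)=d^{t}e(R)$, and compare it with the upper bound $d_1\cdots d_t\, e(R)$ obtained from Corollary~\ref{Cmaxrank} applied to the column module of $N$. The only real divergence is the middle step. The paper gets $\deg(\Phi)=e(R^{(d)})=d^{t}e(R)$ in one line by invoking \cite[Observation 2.8]{KPU} for the finite inclusion $k[f_0,\ldots,f_t]\subset R^{(d)}$, where $R^{(d)}$ is the $d$-th Veronese subring; you instead prove the identity from scratch, splitting it as $[Q(R):Q(A)]=d\cdot\deg(\Phi)$ (via $Q(R)=K(X)(\ell)$, $Q(A)=K(Y)(f_0)$, the transfer $[K(X)(f_0):K(Y)(f_0)]=[K(X):K(Y)]$, and Eisenstein irreducibility of $T^{d}-\ell^{d}$) and $[Q(R):Q(A)]=d^{t+1}e(R)$ (via the congruence-class decomposition $R=\bigoplus_{c=0}^{d-1}R^{(c)}$ over $A$, which is exactly the module-theoretic Veronese decomposition, together with the rank-times-multiplicity formula and Hilbert-polynomial bookkeeping). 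Both routes consume the integrality hypothesis in the same place: it is what makes each $R^{(c)}$ (respectively $R^{(d)}$ over $A$) module-finite, so that rank equals multiplicity and no base-locus correction enters. Your version buys a self-contained, characteristic-free derivation of the degree formula at the cost of some field theory; the paper buys brevity by outsourcing precisely that computation to KPU. One shared caveat: like the paper's own proof, yours needs $R$ to be a domain for $\deg(\Phi)$ and Corollary~\ref{Cmaxrank} to apply, so your phrase ``one may assume $R$ is a domain'' is not a genuine reduction but the same implicit hypothesis the paper uses.
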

\begin{proof} Set $X$ =Proj$(R)$ and let $\Phi:X\dasharrow \mathbb{P}^t$ denote the rational map defined  by ${f_0,\ldots, f_t}$.  If $A:=k[f_0,\ldots f_t]\subset R$ is an integral extension then, in particular, $\Phi$ is a generically finite map.
	
Now, Corollary~\ref{Cmaxrank} implies the inequality $d_1\cdots d_te(R)\geq \deg(\Phi).$ 
On the other hand, $\deg(\Phi)=e(R^{(d)})= e(R)d^t$ by an application of \cite[Observation 2.8 ]{KPU} to the inclusion $A\subset R^{(d)}$, where $R^{(d)}$ is the $d$-th Veronese subring of $R$. Therefore, $\sqrt[t]{d_1\cdots d_t}\geq d$, as desired.
\end{proof}

The next result profits from the existence of common factors among the generators of a base ideal. We give but the simplest case.
 
\begin{Corollary}  In the setting of 
	{\rm Corollary~ \ref{Cmaxrank}}, assume in addition that $f_0$ and $f_1$ have a common factor of degree $\delta$. Then $\deg(\Phi)\leq (d-\delta)d^{m-1}\cdot e(R).$
\end{Corollary}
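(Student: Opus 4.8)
The plan is to manufacture explicit syzygies of $I$ of controlled $\xx$-degree and feed them into Corollary~\ref{Cmaxrank}. First I would write the common factor as a homogeneous form $g$ of degree $\delta$, so that $f_0=g\,h_0$ and $f_1=g\,h_1$ with $h_0,h_1$ forms of degree $d-\delta$; one may assume $f_0,f_1\ne 0$ (otherwise the image lies in a coordinate hyperplane and one reduces to a smaller target) and $1\le\delta\le d-1$ (for $\delta=0$ the asserted bound already follows from Corollary~\ref{Cmaxrank} via the Koszul syzygies, while $\delta=d$ forces $f_0,f_1$ proportional). Then $\sigma:=h_1e_0-h_0e_1$ is a syzygy of $I$, since $h_1f_0-h_0f_1=g(h_1h_0-h_0h_1)=0$; thus $\sigma\in{\rm Im}(\psi)\subset F_0(+d)$ is homogeneous of degree $d-\delta$. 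Alongside it I would take the Koszul syzygies $\kappa_j:=f_0e_j-f_je_0$ for $j=2,\ldots,m$, each homogeneous of degree $d$.

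Next I would set $G:=R\sigma+R\kappa_2+\cdots+R\kappa_m\subseteq{\rm Im}(\psi)$, a graded submodule generated in degrees $d\ge\cdots\ge d\ge d-\delta$ (with $m-1$ copies of $d$), and verify that $\rank G=m$ --- the maximal value possible, since ${\rm Im}(\psi)=\ker(F_0\to I)$ has rank $m$. For this I would display $\sigma,\kappa_2,\ldots,\kappa_m$ as the columns of an $(m+1)\times m$ matrix over the domain $R$, with rows indexed by $0,\ldots,m$, and delete the $0$-th row: what remains is the diagonal $m\times m$ matrix with diagonal entries $-h_0,f_0,\ldots,f_0$, so its determinant is $\pm\,h_0f_0^{\,m-1}\ne 0$ because $h_0\ne 0$ and $f_0\ne 0$. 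Hence $G$ is a graded submodule of ${\rm Im}(\psi)$ of rank $m$, of exactly the type Corollary~\ref{Cmaxrank} requires.

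Finally I would apply Corollary~\ref{Cmaxrank} to this $G$ (its standing hypothesis $\grade_R(I)\ge 2$ is part of the ambient setting): here $s=m$, with generating degrees $d_{i_1}=\cdots=d_{i_{m-1}}=d$ and $d_{i_m}=d-\delta$, so
$$\deg(\Phi)\le d_{i_1}\cdots d_{i_{t-1}}\cdot d_{i_m}\cdot e(R)=d^{t-1}(d-\delta)\,e(R)\le (d-\delta)\,d^{m-1}\,e(R),$$
the last inequality holding because $t=\dim X=\dim Y\le m$ and $d\ge 1$. The only point I expect to need care is the rank computation for $G$: the auxiliary syzygies must be chosen so that, together with $\sigma$, they span a rank-$m$ submodule, and the reason the choice $\kappa_j=f_0e_j-f_je_0$ works is precisely that the relevant maximal minor equals, up to sign, $h_0f_0^{\,m-1}$. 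Everything else is a direct invocation of Corollary~\ref{Cmaxrank}.
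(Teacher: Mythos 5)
Your proof is correct and is essentially the paper's own argument: the paper takes $G$ to be generated by the columns of the ``reduced Koszul'' matrix whose first column is $(f_1/h,\,-f_0/h,\,0,\ldots,0)^{t}$ and whose remaining columns are the Koszul syzygies involving $f_0$, exactly your $\sigma,\kappa_2,\ldots,\kappa_m$ up to sign, and then invokes Corollary~\ref{Cmaxrank}. Your explicit rank-$m$ verification via the diagonal minor $\pm h_0f_0^{m-1}$ and the final degree count $d^{t-1}(d-\delta)e(R)\leq d^{m-1}(d-\delta)e(R)$ simply spell out what the paper leaves implicit.
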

\begin{proof} In the setting of Corollary~\ref{Cmaxrank}, take $G$ to be the submodule generated by the columns of the reduced Koszul relations
$$N:=
\begin{pmatrix}
f_1/h&f_2&\ldots&f_m\\
-f_0/h&0&\ldots&0\\
0&-f_0&\ldots&0\\
\vdots&&&\vdots\\
0&0&\ldots&-f_0
\end{pmatrix},
$$
where $h$ is the given common factor.
\end{proof}

 It is  known that a regular sequence of forms of degree $\geq 2$ cannot be the coordinates of a birational map. 
The degree of the rational map it defines comes out as an equality in the previous corollary. This value  is known (see, e.g., \cite[Observation 3.2]{KPU}). We give another argument in the spirit of this work. 

\begin{Proposition}\label{PCI} Let $R$ denote a standard graded domain over a field $k$, with $\dim R=m+1\, (m\geq 1)$ and ${\rm edim}(R)=n+1$. Set $X:={\rm Proj}(R)\subset \mathbb{P}^n_k$. Let $\{f_0,\ldots,f_m\}\subset R$ be a regular sequence of forms of degree $d\geq 1$ defining a rational map  $\Phi: X\dasharrow \pp_k^m$ . Then $\deg(\Phi)=e(R)d^{m}$.
\end{Proposition}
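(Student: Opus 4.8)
The plan is to realize $\deg(\Phi)$ as a product of two contributions: the multiplicity $e(R)$ of the source variety $X$ and the degree $d^m$ coming from the fact that the coordinates form a regular sequence of forms of degree $d$. The strategy splits $\Phi$ through the $d$-th Veronese: write $\Phi$ as the composite $X \dasharrow X^{(d)} := \Proj(R^{(d)}) \dasharrow \pp^m_k$, where the first map is the (birational onto its image) $d$-uple embedding and the second is induced by the linear system spanned by $f_0,\ldots,f_m$ viewed now as linear forms on $R^{(d)}$. Since field degrees are multiplicative along a composite of generically finite maps and the Veronese map is birational, $\deg(\Phi)$ equals the degree of the rational map $\psi: X^{(d)} \dasharrow \pp^m_k$ defined by the $f_i$'s.

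Next I would analyze $\psi$. Its coordinates $f_0,\ldots,f_m$ now sit in degree $1$ of $R^{(d)}$ and still form a regular sequence (regular sequences are preserved under passing to the Veronese subring, since $R^{(d)}$ is a direct summand of $R$ as an $R^{(d)}$-module, hence $R$ is faithfully flat — in fact, one just needs that a homogeneous regular sequence in $R$ of degrees divisible by $d$ remains regular in $R^{(d)}$, which follows because $R^{(d)} \hookrightarrow R$ is a pure subring and the colon ideals can be computed after the faithfully flat base change $R^{(d)} \to R$). A regular sequence generates an ideal whose Rees algebra is a complete intersection: the defining ideal $J$ of $\mathcal{R}_{R^{(d)}}(I)$ coincides with the presentation ideal $L$ of the symmetric algebra, and is generated precisely by the Koszul relations $f_iy_j - f_jy_i$, which are bihomogeneous of $\xx$-degree $d$ (in the original grading, i.e., degree $1$ in the Veronese grading). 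Thus $J = L = I_1(\yy \cdot \psi_{\mathrm{Kos}})$, a bihomogeneous prime ideal generated by $\binom{m+1}{2}$ elements of $\xx$-degree $d$.

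Now I would apply Theorem~\ref{TDegree} with $\mathcal{I} = J$ and all $d_i = d$: since $J$ is trivially a minimal prime over itself, the bound gives $\deg(\psi) \le d^{t-1}\cdot d \cdot e(R^{(d)}) = d^m\, e(R^{(d)})$, where $t = \dim X^{(d)} = m$. For the reverse inequality — and here is where the argument must be tight — one uses that for a regular sequence the intermediate components in Lemma~\ref{Ldelta}/Lemma~\ref{Lrb} do not intervene: $\widetilde{\mathcal{Z}} = \widetilde{\Gamma}$ is already irreducible (the Koszul complex shows $\mathcal{R}(I) = \Sym(I)$ has no extra components), so the B\'ezout estimate in the proof of Theorem~\ref{TDegree} becomes an equality $\deg(\widetilde{\Gamma}) = d^m \deg(\widetilde{\mathcal{X}})$, because a complete intersection of $m$ hypersurfaces of degree $d$ in $\pp^n_{Q(Y)}$ cut out along $\widetilde{\mathcal{X}}$ has degree exactly $d^m \deg(\widetilde{\mathcal{X}})$ when the intersection is proper (which it is, by Claim~2 in that proof giving codimension $m$). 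Finally, translate back: $\deg(\widetilde{\mathcal{X}}) = \deg(X^{(d)}) = e(R^{(d)})$, and $e(R^{(d)}) = d^{\dim R - 1} e(R) = d^m e(R)$ by the standard Veronese multiplicity formula (or by \cite[Observation 2.8]{KPU} applied to $k[f_0,\ldots,f_m]^{(1)} \subset R^{(d)}$). Combining, $\deg(\Phi) = \deg(\psi) = d^m e(R^{(d)})/d^{?}$... more cleanly: $\deg(\Phi) = \deg(\psi) = e(R^{(d)}) = d^m e(R)$, where the middle equality is the equality case of B\'ezout just discussed.

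The main obstacle is the reverse inequality — turning the upper bound of Theorem~\ref{TDegree} into an equality. This requires knowing that for a regular sequence all the inequalities in the B\'ezout chain (Lemma~\ref{Lrb}) are equalities and that no lower-dimensional or embedded components of $\widetilde{\mathcal{Z}}$ absorb part of the degree; equivalently, that $\mathcal{R}_{R^{(d)}}(I) = \Sym_{R^{(d)}}(I)$ is a domain cut out in the expected codimension $m$ by the $\binom{m+1}{2}$ Koszul biforms, with the generic such complete intersection of $m$ of them (after the avoidance of Lemma~\ref{lcrucial}) meeting $\widetilde{\mathcal{X}}$ transversally in the scheme-theoretic sense needed for B\'ezout equality. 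Once irreducibility and properness of the intersection are in hand, the numeric identity $e(R^{(d)}) = d^m e(R)$ is routine, and the multiplicativity of degrees along the Veronese factorization closes the argument.
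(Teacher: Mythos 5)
Your final formula is right and some ingredients are sound (for a regular sequence one indeed has $\mathcal{R}_R(I)={\rm Sym}_R(I)$ defined by the Koszul biforms, and Theorem~\ref{TDegree} gives the upper bound), but the step you yourself flag as the crux --- the reverse inequality --- is not actually proved, and the argument you sketch for it does not work as stated. ``Proper intersection'' does \emph{not} give the Bézout equality you invoke: Lemma~\ref{Lrb} is only an inequality, and the honest Bézout statement $\sum_l i(\cdot)\deg(T_l)=d^m\deg(\widetilde{\mathcal X})$ counts intersection multiplicities and possible extra components. Irreducibility of $\widetilde{\mathcal Z}=V(\mbox{all Koszul relations})$ does not make the intersection of the $m$ hypersurfaces chosen by the avoidance Lemma~\ref{lcrucial}/\ref{Ldelta} irreducible, let alone generically reduced along $\widetilde\Gamma$; asserting that a ``generic'' choice meets $\widetilde{\mathcal X}$ transversally is exactly the point at issue (and proving it by saying the degrees must add up is circular). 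Moreover, since $R$ is only assumed to be a domain (not Cohen--Macaulay), properness of the intersection alone does not even give the scheme-theoretic degree $d^m\deg(\widetilde{\mathcal X})$: one needs the chosen forms to be a regular sequence on $R\otimes_kQ(Y)$, or an equivalent multiplicity argument. The paper closes precisely this gap by an explicit computation: after tensoring with $Q(Y)$ over $B=k[\yy]$ the elements $y_i$ become units, so the $m$ specific relations $f_jy_0-f_0y_j$ ($j=1,\dots,m$) generate the \emph{entire} ideal $J\otimes_BQ(Y)$; hence $\mathcal{R}_R(I)\otimes_BQ(Y)$ is exactly the quotient of $R\otimes_kQ(Y)$ by a regular sequence of $m$ forms of $\xx$-degree $d$, its multiplicity is exactly $d^m e(R)$, and Lemma~\ref{LHilDeg} (via the reduction in the proof of Theorem~\ref{TDegree}) converts this multiplicity into $\deg(\Phi)$. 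This exact generation statement, or some substitute for it, is the missing idea in your write-up.

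Two secondary defects: the Veronese detour is harmless but its justification is wrong --- $R^{(d)}\hookrightarrow R$ is essentially never (faithfully) flat; the descent of the regular sequence is correct but should be argued via the direct-summand projection ($(f_1,\dots,f_i)R\cap R^{(d)}=(f_1,\dots,f_i)R^{(d)}$), not flat base change. Also, applying Theorem~\ref{TDegree} over $R^{(d)}$ with all $d_i=d$ mixes the two gradings: in the Veronese grading the Koszul relations are \emph{linear}, so the bound there reads $\deg(\psi)\le e(R^{(d)})$, not $d^m e(R^{(d)})$ --- a bookkeeping clash you notice (``$/d^{?}$'') but never resolve. Finally, note that once you are over $R^{(d)}$ the clean way to get the equality is not Bézout at all but the finiteness of $k[f_0,\dots,f_m]\subset R^{(d)}$ (the $f_i$ form a homogeneous system of parameters, so the map is a finite morphism) together with \cite[Observation 2.8]{KPU}, which is exactly how the paper argues in the corollary following Corollary~\ref{Cmaxrank}; the paper's proof of the proposition itself avoids the Veronese entirely.
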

\begin{proof} Since $\{f_0,\ldots,f_m\}$ is a regular sequence, these forms are algebraically independent over $k$, hence the map is generically finite, regular and dominant.
	
One has
$$\mathcal{R}_R(I)=\mathcal{S}_R(I)=\frac{R[y_0,\ldots,y_m]}{(f_iy_j-f_jy_i)}.$$
Under the notation in the proof of Theorem~\ref{TDegree},  tensor with $Q(Y)$ over $B=k[f_0,\ldots,f_m]$, so as to allow  inverting $y_i$'s. Thus, e. g., modulo the ideal $(f_jy_0-f_0y_j|j=1,\ldots,m)$, every generator $f_iy_j-f_jy_i$ with $i,j\neq 0$ is zero. This shows that $\mathcal{R}_R(I)\otimes_B Q(Y)$ is a quotient of $R[y_0,\ldots,y_m]\otimes_B Q(Y)$ by a regular sequence  of length $m$, generated in $\xx$-degree $d$. Thus $e(\mathcal{R}_R(I)\otimes_B Q(Y))=e(R)d^m$ by a well-known formula and  the result follows from the same argument as in the proof of Theorem~\ref{TDegree}.
\end{proof}

\begin{Remark}\label{dimension_one_formulas} The above is the case of a zero-dimensional base ideal. Equalities with base ideal of dimension $1$ have been proved by various authors. For example, the following formula is given in \cite[Theorem 3.3]{BCD}  for  a generically finite rational map
 $\Phi: X\dasharrow Y$ with base ideal of dimension $1$, generated in degree $d$:
  $$d^{t}e(R)- \sum_{\fp \in \proj(R/I)} [\kappa(\fp):k]e_{I_{\fp}}(R_{\fp})=\deg(\Phi)e(S)$$ where $R$ and $S$ are the coordinate rings of $X$ and $Y$ and $t=\dim(X)$. A similar result can be found in \cite[Theorem 2.5]{BJ} and \cite[Theorem 6.6]{SUV}. In the case of polar maps this formula has been obtained in \cite{DP} and \cite{FM}.
  In addition, the conjecture of Dimca for base ideal of dimension one has been proved by J. Huh (\cite[Theorem 4]{Huh}), namely, to the effect that no hypersurface of degree $\geq 3$ with only isolated singularities is homaloidal.
  In $\dim\geq 2$, there is a beautiful general formula in \cite[Theorem 4.1]{Xie} generalizing the formulas of \cite{SUV} by introducing the $j$-multiplicity.
  \end{Remark}


\subsection{Relation to the Jacobian dual rank}\label{Jacobian_dual}

In order to state a few more consequences, we now relate the present approach to the {\it Jacobian dual} method as introduced in detail in \cite{DHS}.

Recall that $B=k[\yy]\subset S=R[\yy]$, $J\subset S$ was a presentation ideal of the Rees algebra $\mathcal{R}_R(I)$ and $\fq\subset B$ was a presentation ideal of the special fiber of $I$. By the identification discussed in Section~\ref{prelims}, one can view $\fq\subset J$ sitting in bidegrees $(0,r)$, for $r\geq 1$, and the quotient field of the homogeneous coordinate ring of the closed image of $\Phi$ (after degree normalization) is identified with the quotient field $B_{\fq}/\fq B_{\fq}$ of the special fiber.

Consider the ideal $(J/\mathfrak{q} S) \otimes_B B_{\mathfrak{q}}\subset (S/\mathfrak{q} S) \otimes_B B_{\mathfrak{q}}\simeq S\otimes_B Q(Y)$, which is naturally graded over $Q(Y)$.
The $Q(Y)$-vector space $(J/\mathfrak{q} S) \otimes_B B_{\mathfrak{q}})_1$ in degree one will play a role in the next result.
Essentially, its elements  are the $\xx$-linear bihomogeneous forms among the defining equations of the Rees algebra of $I$.
Its dimension $s$ is an invariant closely related to the Jacobian dual rank.

\begin{Remark}\label{Rjdrank} The invariant $s:=\dim_{Q(Y)}((J/\mathfrak{q} S) \otimes_B B_{\mathfrak{q}})_1$ is at least the linear rank of the presentation matrix of $I$ (see Corollary~\ref{Cmaxrank}).
As a consequence, Corollary \ref{CLlinearrank} below  will provide another proof of \cite[Theorem 3.2]{DHS}.
If $X\subseteq \mathbb{P}^n$ is non-degenerated, then in fact $s$ coincides with the Jacobian dual rank. It is shown in  \cite[Thorem 2.15]{DHS}  that $s\leq n+\dim(Y)-\dim(X)=n$ with equality if and only if $\Phi$ is birational.  Corollary \ref{CLlinearrank} also provides an alternative argument for the implication $(b) \Rightarrow (a)$ in \cite[Theorem 2.18]{DHS} in the case where $X=\mathbb{P}^n$.
\end{Remark}

\begin{Corollary}\label{CLlinearrank} With the above notation and the hypotheses of {\rm Theorem~\ref{TDegree}} one has 
$$
\deg(\Phi)\leq
\begin{cases}
d_1\cdots d_{t-s}\cdot e(R), {\text ~~if~~} s\leq t-1;\\
e(R), {\text ~~if~~} s\geq t,
\end{cases}
$$
where $t=\dim X$.

\end{Corollary}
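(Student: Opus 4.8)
The plan is to apply Theorem~\ref{TDegree} with a judiciously chosen subideal $\mathcal{I}\subset J$ whose $\xx$-degree generators can be taken with as many $1$'s appended at the end of the sorted list as the invariant $s$ allows. First I would recall that $s=\dim_{Q(Y)}((J/\mathfrak{q}S)\otimes_B B_{\mathfrak{q}})_1$ counts, over the field $Q(Y)$, the linearly independent $\xx$-linear bihomogeneous forms sitting inside $J$ (modulo $\fq$). The point is that one can lift these to genuine bihomogeneous elements of $J$ of $\xx$-degree $1$: choose $\ell_1,\ldots,\ell_s\in J$ whose images in $(J/\mathfrak{q}S)\otimes_B B_{\mathfrak{q}}$ form a $Q(Y)$-basis of the degree-one part. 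Together with enough higher-degree generators of $J$ needed to cut out $\widetilde{\Gamma}$ as an isolated component after base change, these $\ell_i$ form a subideal $\mathcal{I}$ of which $J$ is still a minimal prime.

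Next I would re-run the geometric reduction exactly as in the proof of Theorem~\ref{TDegree}: pass to $\widetilde{\mathcal{X}}={\rm Proj}_{Q(Y)}(R\otimes_k Q(Y))$, with $\widetilde{\mathcal{Z}}={\rm BiProj}$ of $S/\mathcal{I}$ base-changed, and observe that $\widetilde{\Gamma}$ is a maximal irreducible component of $\widetilde{\mathcal{Z}}$ of codimension $p\leq t$ (this is Claim~2 there, which only used that $J$ is a minimal prime of $\mathcal{I}$ and that $\dim(S/J)\otimes_B Q(Y)=1$). The key numerical improvement is that, after base change, the ideal $\mathcal{I}\otimes_BQ(Y)$ contains $s$ independent linear forms; concretely, the linear forms $\ell_1,\ldots,\ell_s$ may be taken as part of a system of parameters through $\widetilde{\Gamma}$, so $\widetilde{\Gamma}$ is an isolated component of the intersection of these $s$ hyperplanes of degree $1$ with $p-s$ further hypersurfaces of degrees at most $d_1\geq\cdots\geq d_{p-s}$ (the remaining generators of $\mathcal{I}$ of highest $\xx$-degrees). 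Applying Lemma~\ref{Ldelta} — or directly Lemma~\ref{Lrb} as in the proof of Theorem~\ref{TDegree} — with these degrees gives
$$
\deg(\widetilde{\Gamma})\leq \underbrace{1\cdots 1}_{s}\cdot d_1\cdots d_{p-s}\cdot \deg(\widetilde{\mathcal{X}})\leq d_1\cdots d_{t-s}\cdot e(R)
$$
when $s\leq t-1$ (using $p\leq t$, $\deg\widetilde{\mathcal{X}}=\deg X=e(R)$, and that the $d_i$ are positive so padding the product from $p-s$ up to $t-s$ factors only increases it), and $\deg(\widetilde{\Gamma})\leq e(R)$ when $s\geq t$, since then all $p\leq t\leq s$ defining equations through $\widetilde{\Gamma}$ can be taken linear, so Lemma~\ref{Lrb} yields no extra factor beyond $\deg(\widetilde{\mathcal{X}})$. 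Since $\deg(\Phi)=\deg(\pi)=\deg(\widetilde{\Gamma})$, this is exactly the claimed bound.

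The main obstacle I anticipate is the bookkeeping in the case $s\geq t$: one must be sure that the $t$ (or $p\leq t$) hypersurface sections needed to isolate $\widetilde{\Gamma}$ can genuinely all be taken among the $s$ linear forms, i.e. that the linear forms $\ell_1,\ldots,\ell_s$ already cut $\widetilde{\Gamma}$ out as an isolated component of their common zero locus intersected with $\widetilde{\mathcal{X}}$; this is where one invokes Lemma~\ref{lcrucialG}(1) (applied over the field $Q(Y)$, which we may assume infinite after the flat extension $k\subset k(u)$) to the ideal generated by the $\ell_i$, extracting a subset of $p$ of them with $\Ht=p$ locally at the minimal prime defining $\widetilde{\Gamma}$. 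The subtlety is that $\widetilde{\Gamma}$ must be verified to be minimal over this smaller ideal too — which follows because $J$ minimal over $\mathcal{I}$ forces $J\otimes_B Q(Y)$ (restricted to the component $\widetilde{\Gamma}$) to be minimal over the image of $(\ell_1,\ldots,\ell_s)$, its height being $p$ by Claim~2. Everything else is a verbatim repetition of the argument in Theorem~\ref{TDegree}, so no genuinely new estimate is required.
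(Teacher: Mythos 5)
Your overall strategy---enlarge the chosen subideal by the $s$ independent $\xx$-linear forms of $J$, rerun the reduction from the proof of Theorem~\ref{TDegree}, and let the B\'ezout-type Lemma~\ref{Lrb} contribute a factor $1$ for each linear cutting equation---is the same as the paper's, which adjoins the ideal $L$ of (residues of) linear forms to $\mathcal{I}\otimes_B B_{\fq}$ and then applies Lemma~\ref{lcrucial}(3) repeatedly $s$ times inside the proof of the main theorem. The gap is exactly at the step you assert without proof: that ``the linear forms $\ell_1,\ldots,\ell_s$ may be taken as part of a system of parameters through $\widetilde{\Gamma}$,'' i.e.\ that $\widetilde{\Gamma}$ is an isolated component of the intersection of the $s$ hyperplanes with only $p-s$ further hypersurfaces of degrees $d_1,\ldots,d_{p-s}$. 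This is the entire content of the corollary, and it is what Lemma~\ref{lcrucial}(2)--(3) (the refined avoidance allowing one to trade high-degree cutting equations for the lowest-degree, here linear, ones, one step at a time) is designed to deliver; nothing in your argument replaces it. In particular, your justification in the case $s\geq t$ is a non sequitur: minimality of $J$ over $\mathcal{I}$ does not imply that $J\otimes_B B_{\fq}$ is minimal over the subideal generated by $\ell_1,\ldots,\ell_s$ alone, and Claim~2 says nothing about that subideal---it bounds the height of the image of $J$, not of its linear part. In a graded domain that is not factorial, $s$ linearly independent linear forms can generate an ideal whose local height at the prime of $\widetilde{\Gamma}$ is strictly smaller than $\min(s,p)$ (already two independent linear forms on a quadric cone cut out a divisor), and in that situation $\widetilde{\Gamma}$ is not an isolated component of the $s$ hyperplanes together with $p-s$ further hypersurfaces, so your degree count collapses.

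Relatedly, your appeal to Lemma~\ref{lcrucialG}(1) ``applied to the ideal generated by the $\ell_i$'' cannot close this hole: that lemma produces elements realizing local heights of the ideal you feed it at its \emph{own} minimal primes, whereas what is needed is that the prime of $\widetilde{\Gamma}$---which need not be minimal over $(\ell_1,\ldots,\ell_s)$ at all---remains minimal after $s$ of the cutting equations are taken linear. The correct mechanism is the inductive avoidance of Lemma~\ref{lcrucial}: at each of the $s$ steps one must verify that some linear form avoids the minimal primes of the previously chosen elements contained in the prime of $\widetilde{\Gamma}$; this is where the hypotheses of item (3) and the enlargement $(L,\mathcal{I}\otimes_B B_{\fq})$ enter in the paper's proof (and where the subtlety recorded in Remark~\ref{Rbetterbound} originates). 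Without supplying this step, your argument only reproves the bound of Theorem~\ref{TDegree} itself, not the improvement by the factor $d_{t-s+1}\cdots d_{t-1}d_r$.
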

\begin{proof}
This is actually, a consequence of the proof of the main theorem.
Let $L\subset J\otimes_BB_{\fq}$ be the subideal generated by the residues of the $\xx$-linear bihomogeneous forms from $J$.  Since $J$ is a minimal prime of $\mathcal{I}$ by assumption,  $J\otimes_BB_{\fq}$ is a minimal prime of $\mathcal{I}\otimes_BB_{\fq}$, hence of $(L,\mathcal{I}\otimes_BB_{\fq})$ as well. Now apply,  Lemma~\ref{lcrucial}, item (3), repeatedly $s$ times in the proof of Theorem~\ref{TDegree}.
\end{proof}

\begin{Remark}\label{Rbetterbound} One may notice that Lemma~\ref{lcrucial}, item (3), and Theorem~\ref{TDegree} give an improved upper bound, 
$$\deg(\Phi)\leq d_1\cdots d_{t-s-1}d_r\cdot e(R), {\text ~~if~~} s\leq t-1,$$
provided  $\mathcal{I}_{d_r}\not\subseteq L$, where $L$ is as above. 
\end{Remark}

We next give some simple examples regarding the upper bound established in Corollary~\ref{CLlinearrank}.

\begin{Example} 
 (Complete intersection base ideal) Here the upper bound is attained, as comes out of the proof of Proposition \ref{PCI}. 
\end{Example}
\begin{Example}\label{mon_example} (Monomial base ideal)

(a) Consider the map $\Phi:\mathbb{P}^2_k\dasharrow \mathbb{P}^2_k$ defined by the monomials $\{x^2,yz,z^2\}$. 
For maps defined by monomials, \cite[Proposition 2.1]{SiVi} gives an easy criterion of birationality in terms of the $\gcd$ of the maximal minors of the corresponding log-matrix, namely, that the latter has to be $\pm d$, where $d$ is the common degree of the monomials. In this case it is nearly trivial to see that $\Phi$ is not birational because the matrix is square of determinant $4$.
Alternatively, a computation with Macaulay2 will give that the base ideal $I=(x^2,yz,z^2)$ has only one linear syzygy and is of linear type. Therefore, by \cite[Proposition 3.2]{DHS}, $\Phi$ is not birational. It also follows that $s=1$, taking for granted that $s$ coincides with the Jacobian dual rank of $I$.

As for the degree of $\Phi$, one can easily sees that a general point such as $(1:ab:b^2)$ has exactly two pre-images, so that $\deg(\Phi)=2$.
Alternatively, one can directly compute the field extension degree $[k((x,y,z)_2):k(x^2,yz,z^2)]$. Namely, one has $k((x,y,z)_2)=k(x^2,y/x,z/x)$ and $(z/x)^2=z^2/x^2\in k(x^2,yz,z^2)$, while $y/x=(yz/z^2)(z/x)$. Therefore, $[k((x,y,z)_2):k(x^2,yz,z^2)]=2$.
Finally, the full minimal graded free resolution of  $(x^2,yz,z^2)$ is 
$$0\rar R(-5)\rar R(-4)^2\oplus R(-3)\rar R(-2)^3\rar 0.$$
 Therefore, according to Corollary \ref{CLlinearrank}, $\deg(\Phi)$ attains the upper bound  $d_1e(R)=2\cdot 1=2$. 
We note that the improved bound in Remark~\ref{Rbetterbound} is not attained here. Actually, the obstruction $\mathcal{I}_{d_r}\subseteq (L)$ takes place, where $L$ is the presentation ideal of the symmetric algebra

(b)
Let $\Phi:\mathbb{P}^2_k\dasharrow \mathbb{P}^2_k$ be defined by $\{x^2y,xz^2,y^2z\}$ This belongs to the class of monomial rational maps whose base ideal has radical $(xy,xz,yz)$. It can be shown that the only birational one (i.e., Cremona) is the classical involution defined by $\{xy,xz,yz\}$. It is an easy exercise to see that the syzygies of $\{x^2y,xz^2,y^2z\}$ are generated by the three reduced Koszul relations, all of standard degree $2$. On the other hand, $I=(x^2y,xz^2,y^2z)$ is an almost complete intersection that is a complete intersection locally at its minimal primes. 
Therefore, it is an ideal of linear type (\cite[Proposition 5.5 and proposition 9.1]{HSV}). It follows that $s=0$, hence the bounds give $\deg(\Phi)\leq 2.2.1=4$.
Finally, by Corollary~\ref{J_rank_null}, $\deg(\Phi)\geq 3$.
To decide between $3$ or $4$, requires a direct calculation. Namely,                                                                                                                                                                                                                                                                                                                                                                                                                                                                                                                                                                                                                                                       $y/x$ satisfies the equation $T^3-(y^2z)^2/(xz^2\cdot x^2 y)$
 of degree $3$ over $k(x^2y,xz^2,y^2z)$. This equation is irreducible as $y/z\notin k(x^2y,xz^2,y^2z)$ -- to see this, otherwise, $x^2z=(x^2/y^2)y^2z\in  k(x^2y,xz^2,y^2z)$, hence also $z/x=xz^2/x^2z\in k(x^2y,xz^2,y^2z)$.
 Since, by a similar token, $x^3=x^2y(x/y)\in k(x^2y,xz^2,y^2z)$, we would have $k((x,y,z)_3)=k(x^3,y/x,z/x)=k(x^2y,xz^2,y^2z)$, which would say that $\Phi$ is birational.
 To conclude, since $4$ is not a multiple of $3$, one must have $\deg(\Phi)=3$. Thus, the upper bound of Corollary~\ref{CLlinearrank} is missed by $1$.
\end{Example}


It may be interesting to stress the case where the source is two-dimensional.
\begin{Corollary}\label{Cdim2} Keep the same assumptions as in {\rm Remark~\ref{Rjdrank}}, let moreover $\dim X=2$. 
Let $d_1$ and $d_r$ respectively denote the highest and the lowest degrees among the degrees of a minimal set of generating syzygies of a base ideal.  Then 
$$\deg(\Phi)=
\begin{cases}
=1, {\text ~~if~~} s\geq 2;\\
\leq d_1e(R), {\text ~~if~~} s=1;\\
\leq d_1d_re(R) {\text ~~if~~} s=0.
\end{cases}
$$
 \end{Corollary}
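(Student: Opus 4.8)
The plan is to derive Corollary~\ref{Cdim2} directly from Corollary~\ref{CLlinearrank} by specializing $t=\dim X=2$ and tracking the three regimes $s\ge 2$, $s=1$, $s=0$, while supplying the one additional input that Corollary~\ref{CLlinearrank} does not by itself record: namely that when $s=1$ and $s=0$ the relevant $\xx$-degrees $d_i$ appearing in the bound can be taken to be the generating degrees of a minimal set of syzygies of the base ideal $I$, i.e. exactly $d_1$ (the largest) and $d_r$ (the smallest). This is precisely the mechanism of Corollary~\ref{Cmaxrank}: the module $G=\Image(\psi)$ itself has rank $m$ and is generated in the syzygy degrees, so one may feed $\mathcal{I}=L=I_1(\yy\cdot\psi)$ (the presentation ideal of $\sym_R(I)$) into Theorem~\ref{TDegree}, and since $\grade_R(I)\ge 2$ holds here ($X$ is a variety, $I$ is the base ideal of a generically finite map, so $I$ has positive height, and for a \emph{base ideal} one may assume the coordinates have no common factor, giving $\grade\ge 2$ — this should be recalled), $J$ is a minimal prime of $L$ by Remark~\ref{Rminprime}.

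For the case $s\ge 2$: Corollary~\ref{CLlinearrank} with $s\ge t=2$ gives $\deg(\Phi)\le e(R)$. One then upgrades ``$\le$'' to ``$=1$'': by Remark~\ref{Rjdrank}, when $X$ is non-degenerate $s$ equals the Jacobian dual rank, and $s\ge t = \dim X$ combined with the inequality $s\le n$ of \cite[Theorem~2.15]{DHS} together with the equality criterion there shows... no — more simply, the cleanest route is to invoke the known fact (e.g. \cite[Theorem 2.18]{DHS}, or directly \cite[Theorem 2.15]{DHS}) that $s$ attaining the value $\dim X$ or above forces $\Phi$ to be birational when the source is non-degenerate; alternatively one observes $e(R)\ge 1$ always and $\deg(\Phi)\ge 1$ always, so $\deg(\Phi)\le e(R)$ alone does not give $=1$, hence the birationality input from \cite{DHS} (as flagged in Remark~\ref{Rjdrank}) is genuinely needed and should be cited explicitly. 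For $s=1$: Corollary~\ref{CLlinearrank} with $s=1\le t-1=1$ gives $\deg(\Phi)\le d_1\cdots d_{t-s}e(R)=d_1 e(R)$, where — via the $\mathcal{I}=L$ choice above — $d_1$ is the top syzygy degree. For $s=0$: Theorem~\ref{TDegree} (equivalently Corollary~\ref{CLlinearrank}'s first branch read with $s=0$, or Corollary~\ref{Cmaxrank} directly) gives $\deg(\Phi)\le d_1\cdots d_{t-1}\cdot d_r\cdot e(R)=d_1 d_r e(R)$.

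The main obstacle is not any deep estimate — those are all already in Theorem~\ref{TDegree} and Corollary~\ref{CLlinearrank} — but rather the bookkeeping needed to justify that the degrees named $d_1$ and $d_r$ in the statement (the extreme degrees of a \emph{minimal} generating set of syzygies) are legitimate substitutes for the degrees $d_1,\ldots,d_r$ of the chosen subideal $\mathcal{I}$ in Theorem~\ref{TDegree}. This requires: (i) choosing $\mathcal{I}=L=I_1(\yy\cdot\psi)$ with $\psi$ the \emph{minimal} presentation matrix, so that its $\xx$-degrees are $d_i$, the minimal syzygy degrees, with $d_1$ largest and $d_r$ smallest; (ii) checking $\grade_R(I)\ge 2$ so Remark~\ref{Rminprime}/Corollary~\ref{Cmaxrank} applies and $J$ is a minimal prime of $L$; and (iii) noting that in the ordering $d_1\ge\cdots\ge d_r$ the product $d_1\cdots d_{t-1}d_r$ for $t=2$ collapses to $d_1 d_r$, and $d_1\cdots d_{t-s}$ for $t=2,s=1$ collapses to $d_1$. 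The only genuinely external ingredient is the identification $s=$ Jacobian dual rank and the birationality consequence in the $s\ge 2$ case, both quoted from \cite{DHS} exactly as set up in Remark~\ref{Rjdrank}; I would state the proof as: ``Apply Corollary~\ref{CLlinearrank} with $t=2$, taking $\mathcal{I}$ to be the presentation ideal $L$ of $\sym_R(I)$ (valid since $\grade_R(I)\ge 2$); the three cases follow, the equality in the first case being \cite[Theorem~2.15]{DHS}.''
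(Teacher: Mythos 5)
Your handling of the cases $s=1$ and $s=0$ is essentially the intended derivation: the paper states Corollary~\ref{Cdim2} without proof, as the specialization to $t=2$ of Corollary~\ref{CLlinearrank} and Theorem~\ref{TDegree} with $\mathcal{I}=L=I_1(\yy\cdot\psi)$ taken from the minimal presentation, so that the $\xx$-degrees are the minimal syzygy degrees. Two small remarks there: for $s=0$ the first branch of Corollary~\ref{CLlinearrank} only yields $d_1d_2$, so you are right that one must quote Theorem~\ref{TDegree} (or Corollary~\ref{Cmaxrank}) to get $d_1d_r$; and your justification of $\grade_R(I)\geq 2$ via ``the coordinates have no common factor'' is both shaky (in a non-normal or non-Cohen--Macaulay coordinate ring this gives neither height $\geq 2$ nor grade $\geq 2$) and unnecessary, because for $\mathcal{I}=L$ all that is needed is that $J$ is a minimal prime of $L$, which holds whenever $R$ is a domain and $I\neq 0$ (Section~\ref{prelims}); the grade hypothesis only enters when one replaces ${\rm Im}(\psi)$ by a proper rank-$m$ submodule as in Corollary~\ref{Cmaxrank}.

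The genuine gap is the case $s\geq 2$. The fact you ultimately lean on --- that $s\geq \dim X$ forces birationality when the source is non-degenerate --- is not what \cite{DHS} proves and is false in general: the criterion quoted in Remark~\ref{Rjdrank} is $s\leq n$ with equality if and only if $\Phi$ is birational, and Theorem~\ref{Tinverse} gives $\deg(\Phi)\geq n+1-s$, so $s\geq 2$ forces $\deg(\Phi)=1$ only when $n=2$. Concretely, let $X\subset\mathbb{P}^3$ be a smooth quadric and $\Phi$ the projection from a point off $X$: then $t=2$, the base ideal is generated by three linear forms whose syzygies are the (linear) Koszul ones, one computes $s=2$, and yet $\deg(\Phi)=2$; this is consistent with $\deg(\Phi)\leq e(R)=2$, which is all that Corollary~\ref{CLlinearrank} yields when $s\geq t$, but it contradicts the conclusion ``$=1$'' obtained your way. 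So to close the first case you must either read the hypotheses as including $X=\mathbb{P}^2$ (so $n=2$ and $s\geq 2$ means $s=n$, whence birationality by \cite{DHS}; this is evidently the situation the corollary is aimed at, cf.\ the comparison with \cite[Proposition 5.2]{BCD}), or assume $s\geq n$, or weaken the first conclusion to $\deg(\Phi)\leq e(R)$; as written, the appeal to \cite[Theorem 2.15]{DHS} does not justify the equality, and no argument built only on Corollary~\ref{CLlinearrank} can, since that corollary produces upper bounds of the form $e(R)$, not $1$.
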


Corollary \ref{Cdim2} generalizes the inequality assertion of the following result: 
\begin{Proposition} {\rm (\cite[Proposition 5.2]{BCD})} Let $\Phi: \mathbb{P}^2\dasharrow \mathbb{P}^2$ be a dominant rational map with a dimension $1$ base ideal $I$ that is moreover saturated, with minimal graded free resolution
$0\rar  R(-d-\mu_1)\oplus R(-d-\mu_2)\rar R(-d)^3\rar 0.$
Then, $\deg(F)\leq \mu_1\mu_2$ with equality if and only if $I$ is locally a complete intersection at its minimal primes.
\end{Proposition}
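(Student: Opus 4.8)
The plan is to deduce the inequality from the general bound already established and to read off the equality case from the exact degree formula recalled in Remark~\ref{dimension_one_formulas}. Write $R=k[x_0,x_1,x_2]$ for the homogeneous coordinate ring of $X=\pp^2_k$ and $S=k[y_0,y_1,y_2]$ for that of its image $Y=\pp^2_k$ (the map being dominant), so that $e(R)=e(S)=1$ and $t=\dim X=2$; since $\dim X=\dim Y$, $\Phi$ is generically finite by Lemma~\ref{Lshafar}. By the Hilbert--Burch theorem, $I=I_2(\psi)$ for a $3\times 2$ matrix $\psi$ whose two columns have standard degrees $\mu_1\ge\mu_2\ge 1$; in particular $d=\mu_1+\mu_2$, and the presentation ideal $L=I_1(\yy\cdot\psi)$ of $\sym_R(I)$ is minimally generated by $r=2$ bihomogeneous forms, of $\xx$-degrees $\mu_1$ and $\mu_2$. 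Since $J$ is a minimal prime of $L$ (Section~\ref{prelims}) and $R$ is factorial with $t=2$, Remark~\ref{Rr-1}(b) applied with $\mathcal{I}=L$ gives $\deg(\Phi)\le d_1\cdots d_{t-2}\cdot d_{r-1}d_r\cdot e(R)=\mu_1\mu_2$; the same bound follows from Corollary~\ref{Cdim2}, whose three cases (according to $s=0,1,\ge 2$) all yield values $\le\mu_1\mu_2$ because $\mu_1,\mu_2\ge 1$.

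For the equality part I would invoke the formula of \cite[Theorem~3.3]{BCD} recalled in Remark~\ref{dimension_one_formulas}; with $t=2$ and $e(R)=e(S)=1$ it reads
$$\deg(\Phi)=d^{2}-\sum_{\fp\in\proj(R/I)}[\kappa(\fp):k]\,e_{I_{\fp}}(R_{\fp}).$$
From the minimal free resolution one computes the Hilbert series of $R/I$ as $(1-3z^{d}+z^{d+\mu_1}+z^{d+\mu_2})/(1-z)^3$; since $\dim R/I=1$ its numerator is divisible by $(1-z)^2$ and $e(R/I)$ is one half of the numerator's second derivative at $z=1$, which, using $d=\mu_1+\mu_2$, equals $\mu_1^{2}+\mu_1\mu_2+\mu_2^{2}$, so that $d^{2}-e(R/I)=\mu_1\mu_2$. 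On the other hand, the associativity formula for multiplicities gives $e(R/I)=\sum_{\fp}[\kappa(\fp):k]\,\ell_{R_{\fp}}(R_{\fp}/I_{\fp})$ (all minimal primes of the perfect, hence Cohen--Macaulay, ring $R/I$ having the same dimension). Combining,
$$\deg(\Phi)=\mu_1\mu_2-\sum_{\fp}[\kappa(\fp):k]\bigl(e_{I_{\fp}}(R_{\fp})-\ell_{R_{\fp}}(R_{\fp}/I_{\fp})\bigr).$$

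Each summand is nonnegative, and this is where the characterization comes from: $R_{\fp}$ is a two-dimensional regular local ring and $I_{\fp}$ is $\fp R_{\fp}$-primary, so comparing $I_{\fp}$ with one of its minimal reductions $\mathfrak{b}$ yields $e_{I_{\fp}}(R_{\fp})=e_{\mathfrak{b}}(R_{\fp})=\ell_{R_{\fp}}(R_{\fp}/\mathfrak{b})\ge\ell_{R_{\fp}}(R_{\fp}/I_{\fp})$, with equality precisely when $\mathfrak{b}=I_{\fp}$, i.e.\ when $I_{\fp}$ is a parameter ideal --- equivalently, in a two-dimensional regular local ring, a complete intersection. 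Hence $\deg(\Phi)=\mu_1\mu_2$ if and only if $e_{I_{\fp}}(R_{\fp})=\ell_{R_{\fp}}(R_{\fp}/I_{\fp})$ for every minimal prime $\fp$ of $I$, i.e.\ if and only if $I$ is locally a complete intersection at its minimal primes. (If $k$ is finite one first passes to $k(u)$, a flat extension that affects none of the invariants above nor the local complete-intersection property.) The only non-formal inputs are the bookkeeping identity $d^{2}-e(R/I)=\mu_1\mu_2$, which relies on the Hilbert-series count and on $d=\mu_1+\mu_2$, and the reduction-theoretic inequality $e_{I_{\fp}}(R_{\fp})\ge\ell_{R_{\fp}}(R_{\fp}/I_{\fp})$ together with its equality case; both are classical, so I expect the argument to be essentially routine once these are in place.
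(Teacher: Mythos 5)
Your argument is correct, but it is worth saying how it sits relative to the paper: the paper does not prove this Proposition at all --- it is quoted from \cite[Proposition 5.2]{BCD} precisely to illustrate that Corollary~\ref{Cdim2} generalizes its \emph{inequality} assertion. Your proof of the inequality is exactly in the paper's spirit (Hilbert--Burch gives $d=\mu_1+\mu_2$ and a two-column syzygy matrix in degrees $\mu_1\ge\mu_2\ge1$, and then Theorem~\ref{TDegree} with $\mathcal{I}=L$, or Remark~\ref{Rr-1}(b), or Corollary~\ref{Cdim2}, yields $\deg(\Phi)\le\mu_1\mu_2$ since $t=2$, $e(R)=1$). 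The equality characterization, however, cannot come from the paper's machinery --- its product-type bounds carry no information about when they are attained --- and you obtain it instead from the exact formula of \cite[Theorem 3.3]{BCD} quoted in Remark~\ref{dimension_one_formulas}, combined with the Hilbert-series computation $d^2-e(R/I)=\mu_1\mu_2$, the associativity formula, and the classical reduction-theoretic fact that for an $\fp R_\fp$-primary ideal in the regular local ring $R_\fp$ one has $e_{I_\fp}(R_\fp)\ge\ell_{R_\fp}(R_\fp/I_\fp)$ with equality iff $I_\fp$ is a parameter (equivalently complete intersection) ideal. That is legitimate, since the statement is itself cited from \cite{BCD}, and it is essentially the route of the original source; just be explicit about two small points: in the quoted formula $\kappa(\fp)$ must be read as the residue field of the closed point $\fp\in\proj(R/I)$ (so that $[\kappa(\fp):k]=e(R/\fp)$, the degree of the point, which is what the associativity formula produces --- the fraction field of $R/\fp$ is not finite over $k$), and lengths over $R_\fp$ agree with those over the local ring of the point on $\proj$; together with the flat passage to $k(u)$ when $k$ is finite (which preserves $\deg(\Phi)$, the resolution, all multiplicities and the local complete-intersection locus), these make the argument complete.
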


See \cite[Theorem 6.3]{CidSi} for more encompassing work assuming the base ideal is perfect of codimension two satisfying a certain condition on the local number of generators. 
\section{Lower Bounds}

There are known lower bounds for the degree of a polar rational map for isolated singularities (see, e.g., \cite[Theorem 1]{Huh}). In the case of arbitrary rational maps, any exact formula such as the ones quoted in Remark~\ref{dimension_one_formulas} implies trivially both upper and lower bounds. A different story is to recover these trivial bounds in terms of other interesting and more concrete invariants. In this part we wish to communicate a lower bound in the spirit of this work, bringing up some computable invariants. For this effect we will draw upon a known affirmative case of the Eisenbud-Goto conjecture (\cite{EG}).

 Recall the notation employed in {\rm Theorem~\ref{TDegree}} and in its proof. Take a presentation $$(S/J)\otimes_B Q(Y)\simeq Q(Y)[x_0,\ldots,x_n]/\mathfrak{J}.$$ 
In the notation of Subsection~\ref{Jacobian_dual}, let  $s=\dim_{Q(Y)}((J/\mathfrak{q} S)\otimes_B B_{{\fq}})_1$ denote the dimension of the  $\xx$-linear forms. As  stated in Remark~\ref{Rjdrank}, $s$ coincides with the Jacobian dual rank ${\rm jdrank}(\Phi)$ of $\Phi$. 
We have seen that $(S/J)\otimes_B Q(Y)$ is a one dimensional domain, hence is in particular Cohen--Macaulay.


\begin{Theorem}\label{Tinverse} In the notations and hypotheses of {\rm Theorem~\ref{TDegree}}, 
$$\deg(\Phi) \geq n+1-{\rm jdrank}(\Phi)+\Reg(\mathfrak{J} )-2$$
where $\Reg(\mathfrak{J})$  is the Castelnuovo--Mumford regularity of $\mathfrak{J}$. 

In particular, one always has $\deg(\Phi)\geq n+1-{\rm jdrank}(\Phi)$. 
\end{Theorem}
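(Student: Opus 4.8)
The plan is to apply the theorem of \cite{EG} (the Eisenbud--Goto bound) to the one-dimensional standard graded domain $A := (S/J)\otimes_B Q(Y) \simeq Q(Y)[x_0,\ldots,x_n]/\mathfrak{J}$, which we have already observed is Cohen--Macaulay of dimension $1$. First I would recall that by Claim~1 in the proof of Theorem~\ref{TDegree} together with Lemma~\ref{LHilDeg}, $\deg(\Phi) = \deg(\pi) = e(A)$. So the task reduces entirely to bounding $e(A)$ from below by $n+1 - \mathrm{jdrank}(\Phi) + \Reg(\mathfrak{J}) - 2$. Since $A$ is Cohen--Macaulay, the affirmative case of the Eisenbud--Goto conjecture for Cohen--Macaulay graded rings (a theorem, not just a conjecture, in this setting) gives
$$\Reg(A) \leq e(A) - \mathrm{codim}(A) = e(A) - (n - 1),$$
where $\mathrm{codim}(A)$ is the codimension of $A$ as a quotient of $Q(Y)[x_0,\ldots,x_n]$, namely $n+1 - \dim A = n+1-1 = n$... wait, more carefully: $A$ is a quotient of a polynomial ring in $n+1$ variables over $Q(Y)$ and $\dim A = 1$, so $\mathrm{codim}(A) = n$; the Eisenbud--Goto inequality then reads $\Reg(A) \leq e(A) - n + \mathrm{depth}(A)$, and with $\mathrm{depth}(A) = \dim A = 1$ (Cohen--Macaulay) this is $\Reg(A) \leq e(A) - n + 1$.

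Next I would relate $\Reg(A)$ to the two quantities in the statement. The regularity of the module $A$ and of the ideal $\mathfrak{J}$ differ by the standard shift: $\Reg(\mathfrak{J}) = \Reg(A) + 1$, provided $\mathfrak{J}$ is not zero (which holds because $\dim A = 1 < n+1$). Thus $\Reg(A) = \Reg(\mathfrak{J}) - 1$. Substituting into the Eisenbud--Goto inequality,
$$\Reg(\mathfrak{J}) - 1 \leq e(A) - n + 1, \qquad \text{i.e.,} \qquad e(A) \geq n + \Reg(\mathfrak{J}) - 2.$$
Then I would bring in the Jacobian dual rank. The presentation $\mathfrak{J}$ of $A$ over $Q(Y)[x_0,\ldots,x_n]$ contains $s = \mathrm{jdrank}(\Phi)$ independent linear forms (the residues of the $\xx$-linear bihomogeneous forms of $J$, as in Subsection~\ref{Jacobian_dual}). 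Eliminating these $s$ variables, $A$ is also a quotient of a polynomial ring in $n+1-s$ variables, so $\mathrm{codim}(A) = (n+1-s) - 1 = n - s$; rerunning Eisenbud--Goto in this smaller ambient ring gives the sharper
$$\Reg(A) \leq e(A) - (n-s) + 1,$$
and hence $e(A) \geq (n+1-s) + \Reg(\mathfrak{J}) - 2 = n+1 - \mathrm{jdrank}(\Phi) + \Reg(\mathfrak{J}) - 2$, which is the claimed bound. The final ``in particular'' clause follows since $\Reg(\mathfrak{J}) \geq 2$ always (any nonzero homogeneous ideal has regularity at least the top generating degree $\geq 1$, and one checks $\Reg(\mathfrak{J})\geq 2$ here because $A$ is not a polynomial ring — it has positive-dimensional but nontrivial defining ideal; alternatively $\Reg(A)\geq 1$ unless $A$ is a polynomial ring, equivalently $\Phi$ birational with $e(A)=1$, in which case the bound is checked directly).

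The main obstacle I anticipate is twofold: (i) being careful and consistent about whether one quotes Eisenbud--Goto for the \emph{module} $A$ or the \emph{ideal} $\mathfrak{J}$, and tracking the off-by-one between $\Reg(A)$ and $\Reg(\mathfrak{J})$ and between the two notions of codimension before and after eliminating the $s$ linear forms — a sign or index slip here is easy and would corrupt the stated inequality; and (ii) justifying cleanly that one may genuinely eliminate all $s$ linear forms, i.e., that the space of $\xx$-linear forms in $\mathfrak{J}$ really has dimension exactly $s = \mathrm{jdrank}(\Phi)$ and that after this elimination $A$ remains a standard graded Cohen--Macaulay domain of dimension $1$ over $Q(Y)$ in $n+1-s$ variables, so that Eisenbud--Goto applies in the reduced ambient ring. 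The degenerate case $\dim A$ small / $\Phi$ birational (where $e(A)=1$ and the regularity bound must be read with care) should be dispatched separately at the end.
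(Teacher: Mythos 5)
Your overall route is the same as the paper's: reduce to the multiplicity of the one\-/dimensional Cohen--Macaulay domain $A=(S/J)\otimes_B Q(Y)\simeq Q(Y)[x_0,\ldots,x_n]/\mathfrak{J}$ via Claim~1 of Theorem~\ref{TDegree} and Lemma~\ref{LHilDeg}, invoke the Cohen--Macaulay case of Eisenbud--Goto, use $\Reg(\mathfrak{J})=\Reg(A)+1$, and get the embedding dimension $n+1-s$ by stripping the $s$ independent linear forms. But there is a concrete off-by-one defect in the key inequality you quote. The Cohen--Macaulay case of Eisenbud--Goto, applied in the minimal (non-degenerate) embedding of $A$, reads $\Reg(A)\leq e(A)-\codim(A)$, equivalently $\Reg(\mathfrak{J})\leq e(A)-\codim(A)+1$; the paper uses it in the form $e(A)\geq \Reg(A)+{\rm edim}(A)-\dim(A)$. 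You instead write $\Reg(A)\leq e(A)-\codim(A)+\depth(A)$, i.e.\ $\Reg(A)\leq e(A)-(n-s)+1$, which conflates the ideal-version and the ring-version of the bound and carries a spurious $+1$. From the inequality as you state it, together with $\Reg(A)=\Reg(\mathfrak{J})-1$, one only obtains $e(A)\geq (n-s)+\Reg(\mathfrak{J})-2$, which is one less than the asserted bound $(n+1-s)+\Reg(\mathfrak{J})-2$; so your final displayed conclusion does not follow from your displayed hypothesis. This is exactly the index slip you flagged as risk (i); with the correct form of the bound the computation closes as in the paper.

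A secondary point: your first pass, applying Eisenbud--Goto in the full ambient ring $Q(Y)[x_0,\ldots,x_n]$ with $\codim(A)=n$, is not legitimate when $s>0$, because $\mathfrak{J}$ then contains linear forms and is degenerate, while the bound is a statement about non-degenerate ideals; indeed the intermediate inequality $e(A)\geq n+\Reg(\mathfrak{J})-2$ you derive there is false in general (for instance when $\Phi$ is birational and $s=n$ one has $e(A)=1$). Only the run in the minimal embedding is valid, which is why the paper phrases everything through ${\rm edim}(A)=n+1-s$. Your worry (ii) is settled exactly as in the paper: since $X$ is non-degenerate, $S\otimes_BQ(Y)\simeq R\otimes_k Q(Y)$ has no linear forms in its defining ideal, so the linear part of $\mathfrak{J}$ has dimension precisely $s=\dim_{Q(Y)}((J/\fq S)\otimes_B B_{\fq})_1={\rm jdrank}(\Phi)$, and removing these forms does not change $A$, its dimension, or $\Reg(\mathfrak{J})$. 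Your handling of the ``in particular'' clause (the case $\Reg(\mathfrak{J})=1$, i.e.\ $\mathfrak{J}$ generated by linear forms) is in substance the paper's, which disposes of it by noting that then $s=n$, so $\Phi$ is birational by \cite{DHS} and the claim is trivial.
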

\begin{proof} According to the proof of Theorem \ref{TDegree}, to compute $\deg(\Phi)$ one has to compute the multiplicity of  the one dimensional domain  $(S/J)\otimes_B Q(Y)$.

We notice that  the  Eisenbud-Goto conjecture holds for Cohen-Macaulay rings \cite[Corollary 4.15]{Eis}. Therefore,
 $$e((S/J)\otimes_B Q(Y))\geq \Reg((S/J)\otimes_B Q(Y))+{\rm edim}((S/J)\otimes_B Q(Y))-\dim((S/J)\otimes_B Q(Y)).$$
 Since $\Reg((S/J)\otimes_B B_{\fq})=\Reg(\mathfrak{J})-1$ and $s={\rm jdrank}(\Phi)$, it suffices to show that ${\rm edim}((S/J)\otimes_B Q(Y))=n+1-s$.
  To this end, note that
$$(S/J)\otimes_B Q(Y)\simeq \frac{S\otimes_BQ(Y)}{((J/\mathfrak{q} S)\otimes_B B_{{\fq}})}, $$ 
while $S\otimes_BQ(Y)\simeq R\otimes_kQ(Y)$ is non-degenerated since $R$ is so. Thus, canceling  $s$ linear forms out of the denominator,  the sought embedding dimension  is  $n+1-s$. 

As for the last claim, notice that if $\mathfrak{J}$ consists merely of linear forms  then ${\rm jdrank}(\Phi) =n$, hence in this case $\Phi$ is a birational map by \cite{DHS}, so the statement is trivial. Otherwise, $\Reg(\mathfrak{J})\geq 2$ and the assertion follows from the proved inequality. 
\end{proof}

\begin{Remark}
The lower bound of Theorem~\ref{TDegree} is attained in Example~\ref{mon_example} (b). Indeed, ${\rm jdrank}(\Phi)=1$ since the base ideal is of linear type with minimal syzygy degree $\geq 2$, while ${\rm reg}(\mathfrak{J})=2$ as $\mathfrak{J}$ is linearly presented perfect of codimension $2$.
\end{Remark}

\begin{Corollary}\label{J_rank_null} In the notation and hypotheses of {\rm Theorem~\ref{TDegree}}, if $J_{(1,\ast)}= 0$ then    $\deg(\Phi)\geq n+1 $. 
\end{Corollary}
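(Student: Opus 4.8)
The plan is to read this off from the lower bound already established in Theorem~\ref{Tinverse}. Under the standing hypotheses of Theorem~\ref{TDegree} (in particular $X$ non-degenerate, so that $\mathrm{jdrank}(\Phi)$ is defined and, by Remark~\ref{Rjdrank}, equals the invariant $s=\dim_{Q(Y)}\bigl((J/\fq S)\otimes_B B_{\fq}\bigr)_1$), that theorem yields $\deg(\Phi)\geq n+1-\mathrm{jdrank}(\Phi)$. Hence it suffices to show that the hypothesis $J_{(1,\ast)}=0$ forces $\mathrm{jdrank}(\Phi)=s=0$, and then plug in.

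To see $s=0$, recall from Subsection~\ref{Jacobian_dual} the inclusion $\fq\subset J$ (with $\fq$ sitting in $\xx$-degree $0$), so that $\fq S\subseteq J$ as bihomogeneous ideals of $S=R[\yy]$. The bigraded short exact sequence $0\to \fq S\to J\to J/\fq S\to 0$ splits off the $\xx$-degree-$1$ strand, giving an exact sequence $0\to (\fq S)_{(1,\ast)}\to J_{(1,\ast)}\to (J/\fq S)_{(1,\ast)}\to 0$; since $J_{(1,\ast)}=0$ by hypothesis, we get $(J/\fq S)_{(1,\ast)}=0$. Because $B=k[\yy]$ lies in $\xx$-degree $0$, the flat base change $B\to B_{\fq}$ preserves the $\xx$-grading, so $\bigl((J/\fq S)\otimes_B B_{\fq}\bigr)_1=0$, that is $s=0$. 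Feeding $\mathrm{jdrank}(\Phi)=s=0$ into Theorem~\ref{Tinverse} gives $\deg(\Phi)\geq n+1$, as wanted.

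There is essentially no obstacle here beyond the grading bookkeeping; all the substance sits in Theorem~\ref{Tinverse}, and the corollary is the extreme case where the Jacobian dual rank degenerates to zero. One may add, as a sanity check (not needed for the proof), that the hypothesis $J_{(1,\ast)}=0$ is quite restrictive: since $S$ is a domain and $x_0\neq 0$ in $R$ by non-degeneracy, any nonzero homogeneous $g\in\fq$ would yield $0\neq g x_0\in (\fq S)_{(1,\ast)}\subseteq J_{(1,\ast)}$, so $J_{(1,\ast)}=0$ in fact forces $\fq=0$; equivalently the special fiber $k[I_d]$ is a polynomial ring and $Y=\mathbb{P}^m$ with $m=\dim X$.
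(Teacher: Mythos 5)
Your proposal is correct and follows essentially the same route as the paper: the paper's proof likewise observes that $J_{(1,\ast)}=0$ forces ${\rm jdrank}(\Phi)=0$ and then invokes the lower bound $\deg(\Phi)\geq n+1-{\rm jdrank}(\Phi)$ of Theorem~\ref{Tinverse}. You merely spell out the grading bookkeeping (that $J_{(1,\ast)}=0$ kills $\bigl((J/\mathfrak{q}S)\otimes_B B_{\mathfrak{q}}\bigr)_1$, i.e.\ $s=0$) which the paper leaves implicit.
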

\begin{proof} If   $J_{(1,\ast)}= 0$ then ${\rm jdrank}(\Phi)=0$,  in the notation of the Theorem \ref{Tinverse}. Therefore $\deg(\Phi)\geq n+1$.
\end{proof}

	Let us note that an immediate application of the above corollary is to the case where the base ideal $I$ is generated by a regular sequence of $d$-forms, with $d\geq 2$.
	In general, the result of the corollary was only known in the case where $\Phi$ is birational onto its image (see \cite{DHS}).

 The next result provides an upper bound for the regularity of  $\mathfrak{J}$.

\begin{Proposition}\label{CP} In the above notation and the hypotheses of {\rm Theorem~\ref{TDegree}}, assume that $X=\pp ^n$. 
	Then
$$\Reg(\mathfrak{J})\leq d_1+\ldots+d_{n-2}+d_{r-1}+d_r-n+1$$
\end{Proposition}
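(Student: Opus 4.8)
The plan is to run the argument of Theorem~\ref{TDegree} once more, but this time keeping track of the Castelnuovo--Mumford regularity of the cut-down ideal rather than merely its degree. Recall that in the proof of Theorem~\ref{TDegree} we realized $(S/J)\otimes_B Q(Y)$ as a one-dimensional standard graded domain over the field $Q(Y)$, presented as $Q(Y)[x_0,\ldots,x_n]/\mathfrak{J}$, which is an irreducible component of $\widetilde{\mathcal{Z}}={\rm Proj}_{Q(Y)}((S/\mathcal{I})\otimes_B Q(Y))$, the latter being cut out in $\mathbb{P}^n_{Q(Y)}$ (here $X=\pp^n$, so $R=k[\xx]$ and $\widetilde{\mathcal{X}}=\pp^n_{Q(Y)}$) by forms of degrees $d_1\geq\cdots\geq d_r\geq 1$. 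The point is that $\widetilde{\Gamma}=\Proj_{Q(Y)}((S/J)\otimes_B Q(Y))$ has codimension $p\leq t=n$ in $\pp^n_{Q(Y)}$, and in fact $p=n$ since $\dim\widetilde{\Gamma}=0$ projectively (the ring is one-dimensional). So $\widetilde{\Gamma}$ is a maximal irreducible component, of codimension exactly $n$, of a scheme defined by $r$ forms of the listed degrees.

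First I would invoke Lemma~\ref{lcrucialG}(2) (equivalently, the avoidance step used inside Lemma~\ref{Ldelta}), applied over the factorial domain $Q(Y)[\xx]$ with the refinement available when the lowest degree permits using two of the lowest-degree generators — exactly as in Remark~\ref{Rfactorial}. This produces $n$ forms $\A_1,\ldots,\A_{n-2},\A_{r-1},\A_r$ of degrees at most $d_1,\ldots,d_{n-2},d_{r-1},d_r$ inside $\mathfrak{J}$ (more precisely inside the ideal defining $\widetilde{\mathcal{Z}}$) such that $\widetilde{\Gamma}$ is an isolated (hence, for dimension reasons, a connected) component of the complete intersection $Y_{\rm ci}:=\Var(\A_1,\ldots,\A_{n-2},\A_{r-1},\A_r)\subset\pp^n_{Q(Y)}$, which is a zero-dimensional scheme. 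Thus $\widetilde{\Gamma}$ is a union of connected components of a zero-dimensional complete intersection of forms of those degrees, so $\mathfrak{J}=\big((\A_1,\ldots,\A_{n-2},\A_{r-1},\A_r):h^\infty\big)$ for a suitable saturating element, i.e.\ $Q(Y)[\xx]/\mathfrak{J}$ is a direct summand (as a ring, corresponding to a union of connected components) of the Artinian complete intersection $A:=Q(Y)[\xx]/(\A_1,\ldots,\A_{n-2},\A_{r-1},\A_r)$.

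Next I would bound the regularity. For the Artinian complete intersection $A$ one has the standard fact $\Reg(A)=\sum(\text{degrees})-n=d_1+\cdots+d_{n-2}+d_{r-1}+d_r-n$ (the socle degree of a complete intersection, Koszul complex computation). Since $Q(Y)[\xx]/\mathfrak{J}$ is a quotient of $A$ corresponding to a union of connected components — equivalently $A\simeq (Q(Y)[\xx]/\mathfrak{J})\times A'$ as graded rings — its regularity is at most that of $A$: $\Reg(Q(Y)[\xx]/\mathfrak{J})\leq d_1+\cdots+d_{n-2}+d_{r-1}+d_r-n$. Finally, passing from the regularity of the quotient ring to the regularity of the ideal $\mathfrak{J}$ costs one: $\Reg(\mathfrak{J})=\Reg(Q(Y)[\xx]/\mathfrak{J})+1\leq d_1+\cdots+d_{n-2}+d_{r-1}+d_r-n+1$, which is the claimed bound.

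The main obstacle I anticipate is the second paragraph: justifying cleanly that $\widetilde{\Gamma}$ is genuinely a union of \emph{connected components} of the zero-dimensional complete intersection $Y_{\rm ci}$, so that the quotient $A\to Q(Y)[\xx]/\mathfrak{J}$ splits off a direct factor and regularity is monotone in the right direction. For a zero-dimensional scheme "isolated component" does force "connected component," and Lemma~\ref{lcrucialG}(2) (with the Remark~\ref{Rfactorial} sharpening) is exactly what guarantees that every associated prime of the chosen complete intersection that lies in the support of $\widetilde{\Gamma}$ is accounted for with the right height, so the localization/saturation argument goes through; but one must be a little careful that the complete intersection is genuinely $n$-dimensional (i.e.\ that the $\A_i$ form a system of parameters), which is where the height computations of the Lemma are used, and that the degree-drops "$\leq d_i$" in part (2) of the Lemma only help (smaller degrees give a smaller socle degree, hence a smaller regularity bound), so no loss occurs. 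Everything else is the Koszul-complex computation of the regularity of a complete intersection and the routine $\Reg(\mathfrak{J})=\Reg(S/\mathfrak{J})+1$.
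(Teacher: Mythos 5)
Your overall strategy (cut $\widetilde{\Gamma}$ out of a scheme defined by $n$ forms of degrees $d_1,\dots,d_{n-2},d_{r-1},d_r$ and transfer a regularity bound, then use $\Reg(\mathfrak{J})=\Reg(Q(Y)[\xx]/\mathfrak{J})+1$) is in the spirit of the paper, but the central step has a genuine gap: you assume that the chosen forms $\A_1,\dots,\A_{n-2},\A_{r-1},\A_r$ form a regular sequence, i.e.\ that $Y_{\rm ci}$ is a genuinely zero-dimensional complete intersection in $\mathbb{P}^n_{Q(Y)}$. Lemma~\ref{lcrucial}/\ref{lcrucialG} and Remark~\ref{Rfactorial} give nothing of the sort: they control heights only \emph{locally at the prime} $\mathfrak{J}$, so they guarantee that $\widetilde{\Gamma}$ is an isolated point of $V(\A_1,\dots,\A_{n-2},\A_{r-1},\A_r)$, but this scheme contains $\widetilde{\mathcal{Z}}$ and may well have positive-dimensional components elsewhere; indeed the image of $\mathcal{I}$ in $Q(Y)[\xx]$ can have height $<n$ (think of $\mathcal{I}=L$, whose image is a generic row ideal picking up the positive-dimensional part of the base locus, exactly the phenomenon discussed after Proposition~\ref{Prowideal}), so no choice of $n$ elements inside it is a regular sequence. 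Once the forms are not a regular sequence, the Koszul ``socle degree'' computation of the regularity is unavailable and the whole comparison collapses. This is precisely why the paper does not argue via a global complete intersection: it only observes that the scheme cut out by the chosen forms is \emph{locally} a complete intersection at the single point of $\widetilde{\Gamma}$ (automatic: $n$ equations at a point of codimension $n$ of the regular ring $Q(Y)[\xx]$), and then invokes the Chardin--Philippon theorem \cite[Theorem A]{CP}, which yields the bound $d_1+\cdots+d_{n-2}+d_{r-1}+d_r-n+1$ for a component along which the scheme is l.c.i., with no global hypothesis. Your proof, as written, replaces that theorem by an argument that only works when $\widetilde{\mathcal{Z}}$ has no extra components of positive dimension.

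Two secondary inaccuracies would also need repair even in the zero-dimensional case: saturating $(\A_1,\dots,\A_{n-2},\A_{r-1},\A_r)$ yields its $\mathfrak{J}$-primary component, not $\mathfrak{J}$ itself (the complete intersection need not be reduced at that point), and the homogeneous coordinate ring of a disjoint union of points is \emph{not} a graded direct product of the coordinate rings of the pieces, so $\Reg(Q(Y)[\xx]/\mathfrak{J})\leq\Reg(A)$ does not follow from a ring splitting. These two points could be fixed without \cite{CP} by the standard monotonicity of regularity for nested zero-dimensional subschemes of projective space (the cokernel of the inclusion of ideal sheaves has finite support, so its higher cohomology vanishes), but that patch presupposes the zero-dimensionality that fails in general, so it does not close the main gap above.
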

\begin{proof} According to Lemma  \ref{lcrucial} and Remark \ref{Rfactorial}, the ideal $\mathfrak{J}$ is defining a subscheme $\tilde{\Gamma}$ of the zero dimensional part $\widetilde{Z}$, defined by equations of degrees $d_1,...,d_{n-2},d_{r-1},d_r$ as in Remark \ref{Rr-1}(b). 
Therefore, $\widetilde{Z}$ is locally a complete intersection at each point of $\tilde{\Gamma}$.

Therefore, according to  \cite[Theorem A]{CP}, the claimed inequality holds.
\end{proof}


\section{Alternative approach}

Although syzygies are very important, they are not the only data covered by Theorem~\ref{TDegree}. Actually, if one is only interested in an upper bound given by the product of the highest degrees of the syzygies of $I$, then there will be other ways to see this fact, although obtaining the same upper bound has more cost.

Here, we recall the notion of a {\em row ideal} introduced in \cite{EU}.   Let $R$ be a  Noetherian ring and  $I=(f_0,\ldots,f_m) $ be an ideal of $R$ with a presentation matrix $\psi$. Let 
 $\xi$ be the row matrix  $[q_0\ldots q_m]$ with entries in $R$. The (generalized) row ideal of $I$ corresponding to $\xi$ is the ideal $I_1(\xi\cdot \psi)$.

Consider the polynomial ring $R[\TT]=R[T_0,\ldots, T_m]$ and a specialization map $R[\TT]\rar R$ with $T_i\mapsto q_i$.
Given elements $\{f_0,\ldots,f_m\}\subset R$, let $I_q\subset R$ denote the ideal of $R$ generated by the image of the Koszul relations $\{f_iT_j-f_jT_i| 0\leq i<j\leq m\}$.

 The following Lemma is a basic fact about row ideals. We provide a proof for the sake of completeness.

\begin{Lemma}\label{Lrow} Let $R$ be a  Noetherian ring and let $I=(f_0,\ldots,f_m) $ be an ideal of $R$ with a presentation matrix $\psi$. In the above notation, if at least one of the $q_i$'s is a unit, then $$I_1(\xi\cdot \psi)=I_q:_RI,$$
	where $\xi$ is the row matrix  $[q_0\ldots q_m]$.
\end{Lemma}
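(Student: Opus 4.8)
The plan is to prove the two inclusions separately, using the hypothesis that some $q_i$ is a unit to normalize the setup. First I would observe that, after permuting the generators, I may assume $q_0$ is a unit, and in fact --- by rescaling --- that $q_0 = 1$. The point of this normalization is that the Koszul relations $f_i T_j - f_j T_i$ specialize, under $T_0 \mapsto 1$ and $T_i \mapsto q_i$, to elements that include $f_i - f_0 q_i$ for $i \geq 1$ (taking $j=0$), so that modulo $I_q$ one has $f_i \equiv f_0 q_i$; thus $I \equiv (f_0)$ modulo $I_q$ in a suitable sense, and $\xi \cdot \psi$ can be recognized as (up to the unit column operation) the first row of $\psi$ after the change of basis that replaces $f_i$ by $f_i - f_0 q_i$.

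For the inclusion $I_1(\xi\cdot\psi)\subseteq I_q :_R I$: take a syzygy column $v = (v_0,\dots,v_m)^{t}$ of $\psi$, so $\sum_i v_i f_i = 0$, and the corresponding entry of $\xi\cdot\psi$ is $\sum_i q_i v_i$. Multiplying this by any generator $f_j$ and using $\sum_i v_i f_i = 0$, I would rewrite $f_j \sum_i q_i v_i = \sum_i v_i (q_i f_j - q_j f_i) + q_j \sum_i v_i f_i = \sum_i v_i(q_i f_j - q_j f_i)$, which lies in $I_q$ because each $q_i f_j - q_j f_i$ is the specialization of the Koszul relation $f_i T_j - f_j T_i$ (here I would be a little careful about signs and the case $i=j$, but those terms vanish). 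Hence $(\xi\cdot\psi)$-entries multiply $I$ into $I_q$, giving the inclusion.

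For the reverse inclusion $I_q :_R I \subseteq I_1(\xi\cdot\psi)$: here I would use the normalization $q_0 = 1$ and present $I_q$ concretely. With $q_0=1$, the specialized Koszul relations $f_i - f_0 q_i$ ($1\le i\le m$) already generate a submodule, and the remaining ones $q_i f_j - q_j f_i$ for $i,j\ge 1$ are $R$-combinations of these (since $q_i f_j - q_j f_i = q_i(f_j - f_0 q_j) - q_j(f_i - f_0 q_i)$). So $I_q = (f_i - f_0 q_i : 1\le i\le m)$. Changing the free generating set of $F_0 = R^{m+1}$ via the unimodular matrix that sends the standard basis to $e_0, e_1 - q_1 e_0, \dots, e_m - q_m e_0$ transforms $\psi$ into a matrix $\psi'$ presenting the same module but with respect to the new generators $f_0, f_1 - f_0 q_1, \dots, f_m - f_0 q_m$ of $I$; under this change $\xi = [1\, q_1 \cdots q_m]$ becomes $[1\, 0 \cdots 0]$, so $I_1(\xi\cdot\psi) = I_1((1,0,\dots,0)\psi') = $ the ideal generated by the first row of $\psi'$. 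Now if $a \in I_q :_R I$, then $a f_0 \in I_q = (f_1 - f_0 q_1, \dots, f_m - f_0 q_m)$, say $a f_0 = \sum_{i\ge 1} c_i(f_i - f_0 q_i)$; rearranging, $a f_0 - \sum_{i\ge 1} c_i f_i + (\sum_{i\ge 1} c_i q_i) f_0 = 0$, i.e. $(a + \sum c_i q_i, -c_1, \dots, -c_m)$ is a syzygy of $(f_0, f_1, \dots, f_m)$. Pulling this back through the basis change gives a syzygy of the new generators whose first coordinate is (up to adding a combination already accounting for $a$) exactly $a$, so $a$ is an entry-combination of the first row of $\psi'$, hence $a \in I_1(\xi\cdot\psi)$. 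I expect the bookkeeping of this last pullback --- tracking which coordinate of the transformed syzygy carries $a$ --- to be the main obstacle, essentially because one must check that the presentation $\psi'$ really does capture \emph{all} syzygies (it does, being obtained from $\psi$ by an invertible change of basis on $F_0$, which does not change $\operatorname{Im}\psi$) and that $a$ survives as the relevant component.
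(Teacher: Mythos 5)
Correct, and essentially the paper's own argument: your forward inclusion is the identical computation, and your reverse inclusion produces the same syzygy $(a+\sum_i c_iq_i,\,-c_1,\ldots,-c_m)$ and expresses it through the columns of the presentation matrix, with your change-of-basis bookkeeping (turning $\xi$ into $[1,0,\ldots,0]$ so that $a$ is read off from the first row of $\psi'$) playing exactly the role of the paper's step of multiplying the column expression by $\xi$ and cancelling the unit $q_0$. A small bonus of your write-up is that you verify explicitly that $I_q=(f_i-q_if_0:\,1\le i\le m)$ when $q_0$ is a unit, a reduction the paper uses only tacitly when it writes $sf_0$ in terms of those $m$ generators.
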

\begin{proof}
 Let $\psi=(a_{ij})$ be an $(m+1)\times r$ matrix presenting $I$. Let $\mathcal{J}=(j_1,\ldots,j_r):=I_1(\xi\cdot \psi)$. Note that $j_t=\sum_{i=0}^rq_ia_{it}$.

 For any $f_k\in I$, one has
\begin{equation*}
j_tf_k =\sum_{i=0}^mq_ia_{it}f_k\equiv \sum_{i=0}^mq_ka_{it}f_i \bmod I_q
\end{equation*} 
But $\sum_{i=0}^mq_ka_{it}f_i=q_k \sum_{i=0}^ma_{it}f_i$ vanishes since $(a_{0t},\ldots,a_{mt})$ is a syzygy of $f_0,\ldots,f_m$.
This shows the inclusion $I_1(\xi\cdot \psi)\subseteq I_q:_RI$.

To see the reverse inclusion, assume without loss of generality that $q_0$ is unit. Let $s\in (I_q:_RI)$. Writing out  $sf_0\in I_q$ gives 
$$sf_0=b_1(q_1f_0-q_0f_1)+\ldots+b_m(q_mf_0-q_0f_m),$$
for certain $\{b_1,\ldots,b_m\}\subset R$.
Hence, $(s-\sum_{i=1}^m b_iq_i)f_0+\sum_{i=1}^mq_0b_if_i=0$, i.e.,
the vector $(s-\sum_{i=1}^mb_iq_i,q_0b_1,\ldots,q_0b_m)$ is a syzygy of $I$.  Therefore, 
$$(s-\sum_{i=1}^m b_iq_i,q_0b_1,\ldots,q_0b_m)^t=t_1C_1+\ldots+t_rC_r,$$
for some $t_1,\ldots,t_r\in R$ where $C_1,\ldots,C_r$ denote the columns of $\psi$.
Multiplying out by the row matrix $[q_0\ldots q_m]$ yields
$$((q_0s-\sum_{i=1}^m b_iq_iq_0)+\sum_{i=1}^mb_iq_iq_0)=\sum_{i=1}^rt_ij_i.$$Thence $q_0s\in \mathcal{J}$. Since $q_0$ is a unit,  $s\in \mathcal{J}$ as desired.
\end{proof}
We now return to the setting of the main Theorem \ref{TDegree} with the notation of Corollary~\ref{Cmaxrank}. For any point $q\in Y$, the fiber $\Phi^{-1}(q)$ is determined by the ideal $\fa=(I_q:_RI^{\infty})$. In particular, $\deg(\Phi)=e(R/\fa)$ for a general choice of $q$, see for example  \cite[Proposition 3.6]{KPU}. 

 The next proposition has already been obtained as a corollary of Theorem~ \ref{TDegree} under weaker conditions. Here, we give an alternative proof  using the idea of row ideals.
\begin{Proposition}\label{Prowideal} Let $\Phi:X\dasharrow Y$ be a generically finite dominant rational map and let 
	$$\bigoplus^rR(-d-d_i)\xrightarrow{\psi} \bigoplus^{m+1}R(-d)\to I\to 0$$ 
	stand for the minimal  graded presentation of the base ideal $I=(f_0,\ldots,f_m)$ of $\Phi$.
	
If the  base locus of $\Phi$ has dimension at most zero and $R$ is Cohen--Macaulay,  then $\Deg(\Phi)\leq e(R)d_1\cdots d_{t-1}d_r$ where $t=\dim X$.
\end{Proposition}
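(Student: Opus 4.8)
The strategy is to run the row-ideal machinery in tandem with the identification $\deg(\Phi)=e(R/\fa)$ for a general point $q\in Y$, where $\fa=(I_q:_RI^\infty)$, and then to bound the degree of the scheme $\Var(\fa)$ by a B\'ezout-type estimate. First I would fix a general closed point $q\in Y$; since $\Phi$ is generically finite and dominant, $\Var(\fa)=\Phi^{-1}(q)$ is a finite set of reduced points in general position, and by \cite[Proposition 3.6]{KPU} one has $\deg(\Phi)=e(R/\fa)=\deg(\Var(\fa))$. Because $q$ is general, at least one coordinate $q_i$ of $q$ may be taken to be a unit (indeed nonzero in the residue field), so Lemma~\ref{Lrow} applies and gives $I_q:_R I = I_1(\xi\cdot\psi)$, a row ideal of $I$. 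Writing $\xi\cdot\psi=(j_1,\ldots,j_r)$, this is an ideal generated by $r$ forms of degrees $d_1\geq\cdots\geq d_r$ (the column degrees of $\psi$, unchanged by the row specialization since $\xi$ has constant entries of degree zero).

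\textbf{Reducing to Lemma~\ref{Ldelta}.} Next I would pass to the saturation: $\fa=(I_q:_RI^\infty)$ contains $I_1(\xi\cdot\psi)$, and since the base locus of $\Phi$ has dimension at most zero, the two ideals agree away from finitely many points; more precisely, every minimal prime of $R/\fa$ that is relevant (i.e., not equal to the irrelevant maximal ideal, and not supported on the base locus) is already a minimal prime of $R/I_1(\xi\cdot\psi)$ of the same height. Thus the finitely many points of $\Phi^{-1}(q)$ — which by genericity all lie outside the base locus and all have codimension $t=\dim X$ in $X$ — are precisely among the codimension-$t$ isolated components of $\Var(I_1(\xi\cdot\psi))\subseteq X$. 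Now apply Lemma~\ref{Ldelta} with $V=X$, with $T$ the subscheme of $X$ defined by $j_1,\ldots,j_r$ (of degrees $d_1\geq\cdots\geq d_r\geq 1$), and $p=t$: it yields
\[
\sum_{\{i\mid \codim_X(T_i)\leq t\}}\deg(T_i)\ \leq\ d_1\cdots d_{t-1}\,d_r\,\deg(V)\ =\ d_1\cdots d_{t-1}\,d_r\,e(R).
\]
Since the (reduced) points of $\Phi^{-1}(q)$ are among the $T_i$ with $\codim_X(T_i)=t$, their total degree $\deg(\Phi)=e(R/\fa)$ is bounded by the left-hand sum, and the claimed inequality $\deg(\Phi)\leq e(R)\,d_1\cdots d_{t-1}d_r$ follows.

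\textbf{Role of the Cohen--Macaulay hypothesis.} The Cohen--Macaulayness of $R$ enters to guarantee that $e(R/\fa)$ genuinely computes the degree of $\Phi$ via the fiber, i.e., that there is no excess intersection hidden in embedded components: for a general $q$, the zero-dimensional scheme $R/\fa$ is a reduced finite set and $\fa$ is unmixed of the right dimension, so $\dim(R/\fa)=0$ and $e(R/\fa)=\dim_k(R/\fa)_{\gg0}$ counts the fiber points with multiplicity one; this is what makes the reference \cite[Proposition 3.6]{KPU} available in the clean form $\deg(\Phi)=e(R/\fa)$, and it also ensures that no lower-dimensional component of $\Var(I_1(\xi\cdot\psi))$ sitting on the base locus can contaminate the count.

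\textbf{Main obstacle.} The delicate point — the part I expect to require the most care — is the bookkeeping in the middle step: one must verify that for a \emph{general} $q$ the fiber $\Phi^{-1}(q)$ consists of points that (a) avoid the base locus of $\Phi$, (b) each have codimension exactly $t$ in $X$, and (c) occur as isolated (codimension-$t$) components of $\Var(I_1(\xi\cdot\psi))$ rather than being absorbed into some higher-dimensional component. Item (a) is standard generic smoothness/flatness over the dense open locus where $\Phi$ is defined and finite (Lemma~\ref{Lshafar}(iii)); item (b) follows because over that open locus the fibers are finite; item (c) is the assertion that passing from $I_q:_RI$ to its saturation $I_q:_RI^\infty$ does not destroy these components, which is exactly where "base locus of dimension $\leq 0$" is used — any component that gets removed by the saturation is supported on $\Var(I)$, hence on the (finite) base locus, hence does not meet the general fiber. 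Once this is laid out carefully, the B\'ezout estimate of Lemma~\ref{Ldelta} closes the argument, recovering (under the extra Cohen--Macaulay and base-locus hypotheses) the same bound as Theorem~\ref{TDegree} via an independent route.
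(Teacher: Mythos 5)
There is a genuine gap, and it sits exactly at the bridge you build between $e(R/\fa)$ and Lemma~\ref{Ldelta}. Lemma~\ref{Ldelta} only controls $\sum_i\deg(T_i)$ for the \emph{reduced} irreducible components $T_i$, whereas $\deg(\Phi)=e(R/\fa)$ with $\fa=(I_q:_RI^{\infty})$ is a multiplicity that, by the associativity formula, counts each component weighted by the length $\ell\big((R/\fa)_{\fp}\big)$. You pass from one to the other by asserting that for general $q$ the fiber scheme is a reduced set of points, and you attribute this to the Cohen--Macaulayness of $R$. Neither is correct: the paper works in arbitrary characteristic, and reducedness of the general fiber fails there (nor does CM-ness of $R$ have anything to do with it). For the Frobenius map $\pp^1_k\dasharrow\pp^1_k$, $(x:y)\mapsto(x^p:y^p)$ in characteristic $p$, the general fiber is one point of length $p$; your estimate bounds the number of reduced points (namely $1$), not $\deg(\Phi)=p=e(R/\fa)$, so the chain of inequalities you write does not bound $\deg(\Phi)$ at all. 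Even in characteristic zero you would need a separate separability/generic-smoothness argument that is not among the hypotheses. (The identity $\deg(\Phi)=e(R/\fa)$ from \cite[Proposition 3.6]{KPU} holds for general $q$ without any CM assumption; CM is not what makes it available.)

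The paper's proof stays entirely at the level of multiplicities, which is precisely where the two hypotheses are used. The zero-dimensional base locus forces every associated prime of $(I_q:_RI)$ whose radical contains $I$ to have codimension $\geq\codim(I)\geq t$, so the row ideal $\mathcal{J}=I_1(\xi\cdot\psi)=(I_q:_RI)$ (your Lemma~\ref{Lrow} step is fine) has codimension exactly $t$; Cohen--Macaulayness of $R$ then gives $\grade(\mathcal{J})=t$, so by Lemma~\ref{lcrucial} one can pick a regular sequence $\alpha=(\alpha_1,\ldots,\alpha_t)\subset\mathcal{J}$ of degrees $d_1,\ldots,d_{t-1},d_r$, and conclude via the multiplicity-sensitive comparison $e(R/\fa)\leq e(R/\mathcal{J})\leq e(R/(\alpha))=e(R)\,d_1\cdots d_{t-1}d_r$, i.e.\ a B\'ezout bound with multiplicities. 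To repair your argument you would have to replace the appeal to Lemma~\ref{Ldelta} by such a length-aware estimate (or prove reducedness of the general fiber under extra hypotheses); the reduced-degree count cannot, by itself, dominate $e(R/\fa)$.
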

\begin{proof} Without loss of generality, we may assume that the base field is infinite.  Pick  a general point $q=(q_0,\ldots,q_m)\in Y$  and  let $I_q$ be as above. 
Write $I_q=Q_1\cap\ldots \cap Q_l\cap Q_{l+1}\cap \ldots\cap Q_s$ for the primary decomposition of $I_q$ where $\sqrt Q_i\not \supset I$ for $i\leq l$ and $\sqrt Q_i \supset I$ for $i>l$.  Clearly, then 
 $$(I_q:_RI)=(I_q:_RI^{\infty})\cap (Q_{l+1}:_RI)\cap \ldots\cap (Q_s:_RI).$$ 
 Since the map is generically finite, the remark preceding Proposition \ref{Prowideal} gives $\codim(I_q:I^{\infty})=t$. Also,  $\codim(Q_{i}:I)\geq \codim(I)\geq \dim X=t$.  Thus, $\codim(I_q:I^{\infty})=\codim(I_q:I)=t$. Now let  $\mathcal{J}:=I_1(q\cdot \psi)$. Note that $\mathcal{J}$ is generated by elements of degrees $d_1,\ldots, d_r$. By Lemma~\ref{Lrow}, $\mathcal{J}=(I_q:I)$. Since $R$ is Cohen-Macaulay, $\mathcal{J}$ contains a regular sequence,  say $\A$, of length $t$. According to Lemma~\ref{lcrucial}, $\A$ can be chosen in degrees  $d_1,\ldots,d_{t-1},d_r$. The result now follows from the fact that 
 $$e(R/(I_q:_RI^{\infty}))\leq e(R/\mathcal{J})\leq e(R/\A)=e(R)d_1\cdots d_{t-1}d_r.$$ 
\end{proof}

Our examples show that without the standing assumptions in Proposition \ref{Prowideal} the last inequalities in the above proof do not hold. Although the inequality  $\Deg(\Phi)\leq e(R)d_1\cdots d_{t-1}d_r,$   still holds by Corollary \ref{Cmaxrank}.

\medskip

{\sc Acknowledgments.}
The first and second authors thank the Franco-Brazilian Network in Mathematics for providing facilities to mutual interchange visits.


\noindent {\bf Addresses:}

\medskip

\noindent  {\sc Marc Chardin}\\
Institut de Math\'ematiques de Jussieu, CNRS \& Sorbonne Universit\'e\\ 
4 Place Jussieu 75005 Paris, France\\
{\em e-mail}: marc.chardin@imj-prg.fr

\medskip

\noindent {\sc Seyed Hamid Hassanzadeh}\\
Centro de Tecnologia - Bloco C, Sala ABC\\
 Cidade Universit\'{a}ria da Universidade Federal do Rio de Janeiro,\\
21941-909  Rio de Janeiro, RJ, Brazil\\
{\em e-mail}: hamid@im.ufrj.br 

\medskip

\noindent {\sc Aron Simis}\\
Departamento de Matem\'atica, CCEN\\ 
Universidade Federal de Pernambuco\\ 
50740-560 Recife, PE, Brazil\\
{\em e-mail}:  aron@dmat.ufpe.br

\end{document}